\title[Three counterexamples to a conjecture of Colin de Verdi\`ere]{Three counterexamples to a conjecture of Colin de Verdi\`ere on multiplicity}
\author[M. Fortier Bourque]{Maxime Fortier Bourque}
\address{D\'epartement de math\'ematiques et de statistique, Universit\'e de Montr\'eal, 2920, chemin de la Tour, Montr\'eal (QC), H3T 1J4, Canada}
\email{maxime.fortier.bourque@umontreal.ca}
\author[\'E. Gruda-Mediavilla]{\'Emile Gruda-Mediavilla}
\address{D\'epartement de math\'ematiques et de statistique, Universit\'e de Montr\'eal, 2920, chemin de la Tour, Montr\'eal (QC), H3T 1J4, Canada}
\email{emile.gruda-mediavilla@umontreal.ca}
\author[B. Petri]{Bram Petri}
\address{Institut de Math\'ematiques de Jussieu--Paris Rive Gauche and  Institut universitaire de France ; Sorbonne Universit\'e and Universit\'e Paris Cit\'e, CNRS, IMJ-PRG, F-75005 Paris, France;}
\email{bram.petri@imj-prg.fr}
\author[M. Pineault]{Mathieu Pineault}
\address{D\'epartement de math\'ematiques et de statistique, Universit\'e de Montr\'eal, 2920, chemin de la Tour, Montr\'eal (QC), H3T 1J4, Canada}
\email{mathieu.pineault.1@umontreal.ca}
\date{\today}
\numberwithin{equation}{section}
\newtheorem{thm}{Theorem}[section]
\newtheorem{prop}[thm]{Proposition}
\newtheorem{prp}[thm]{Proposition}
\newtheorem{lem}[thm]{Lemma}
\theoremstyle{definition}
\theoremstyle{remark}
\newtheorem{rem}[thm]{Remark}
\newtheorem*{acknowledgements}{Acknowledgements}
\newtheorem*{funding}{Funding}
\newcommand{\thmref}[1]{Theorem~\ref{#1}}
\newcommand{\propref}[1]{Proposition~\ref{#1}}
\newcommand{\lemref}[1]{Lemma~\ref{#1}}
\newcommand{\figref}[1]{Figure~\ref{#1}}
\newcommand{\nc}{\newcommand}
\nc{\dmo}{\DeclareMathOperator}
\nc{\wtilde}{\widetilde}
\nc{\abs}[1]{\left| #1 \right|}
\nc{\bigO}[1]{O\left(#1\right)}
\nc{\card}[1]{\left|#1\right|}
\nc{\ceil}[1]{\left\lceil #1 \right\rceil}
\nc{\CC}{\mathbb{C}}
\nc{\dilog}{\mathcal{L}}
\nc{\floor}[1]{\left\lfloor #1 \right\rfloor}
\nc{\ind}{\mathds{1}}
\nc{\ZZ}{\mathbb{Z}}
\nc{\len}[1]{\left| #1 \right|}
\nc{\littleo}[1]{o\left(#1\right)}
\dmo{\Mat}{Mat}
\nc{\NN}{\mathbb{N}}
\nc{\norm}[1]{\left|\left| #1 \right|\right|}
\nc{\QQ}{\mathbb{Q}}
\nc{\RR}{\mathbb{R}}
\nc{\st}[2]{\left\{\, #1 \,:\, #2\,\right\}}
\dmo{\supp}{supp}
\nc{\tr}[1]{\mathrm{tr}\left(#1\right)}
\nc{\what}{\widehat}
\dmo{\im}{Im}
\dmo{\re}{Re}
\nc{\eps}{\varepsilon}
\dmo{\li}{li}
\dmo{\arccosh}{arccosh}
\dmo{\arcsinh}{arcsinh}
\dmo{\area}{area}
\dmo{\conv}{conv}
\dmo{\diam}{diam}
\dmo{\DD}{\mathbb{D}}
\dmo{\dist}{\mathrm{d}}
\nc{\HH}{\mathbb{H}}
\dmo{\Isom}{Isom}
\dmo{\MCG}{MCG}
\dmo{\MPL}{MPL}
\dmo{\Mod}{\mathcal{M}}
\dmo{\PL}{PL}
\nc{\Sphere}{\mathbb{S}}
\dmo{\sys}{sys}
\dmo{\kiss}{kiss}
\dmo{\Teich}{\mathcal{T}}
\nc{\Torus}{\mathbb{T}}
\dmo{\vol}{vol}
\dmo{\WP}{WP}
\nc{\Nsmall}{N_\mathrm{small}}
\dmo{\rect}{rect}
\dmo{\chr}{chr}
\dmo{\spec}{spec}
\dmo{\Irr}{Irr}
\dmo{\axis}{axis}
\dmo{\convTV}{\;\stackrel{\mathrm{TV}}{\longrightarrow}\;}
\nc{\ExV}[2]{\mathbb{E}_{#1}\left[#2\right]}
\dmo{\EE}{\mathbb{E}}
\nc{\Pro}[2]{\mathbb{P}_{#1}\left[#2\right]}
\dmo{\PP}{\mathbb{P}}
\nc{\distTV}[2]{\mathrm{d}_{\rm TV}\left(#1,#2\right)}
\dmo{\UU}{\mathbb{U}}
\nc{\Var}[2]{\mathbb{V}\mathrm{ar}_{#1}\left[#2\right]}
\dmo{\alt}{\mathfrak{A}}
\dmo{\Aut}{Aut}
\dmo{\Fix}{Fix}
\dmo{\GL}{GL}
\dmo{\Hom}{Hom}
\dmo{\id}{id}
\dmo{\PSL}{PSL}
\dmo{\PGL}{PGL}
\dmo{\PO}{PO}
\dmo{\Rep}{Rep}
\dmo{\SL}{SL}
\dmo{\SO}{SO}
\dmo{\sym}{\mathfrak{S}}
\dmo{\inv}{\mathcal{I}}
\dmo{\orb}{\mathcal{O}}
\dmo{\stab}{Stab}
\nc{\calA}{\mathcal{A}}
\nc{\calB}{\mathcal{B}}
\nc{\calC}{\mathcal{C}}
\nc{\calD}{\mathcal{D}}
\nc{\calE}{\mathcal{E}}
\nc{\calF}{\mathcal{F}}
\nc{\calG}{\mathcal{G}}
\nc{\calH}{\mathcal{H}}
\nc{\calI}{\mathcal{I}}
\nc{\calJ}{\mathcal{J}}
\nc{\calK}{\mathcal{K}}
\nc{\calL}{\mathcal{L}}
\nc{\calM}{\mathcal{M}}
\nc{\calN}{\mathcal{N}}
\nc{\calO}{\mathcal{O}}
\nc{\calP}{\mathcal{P}}
\nc{\calQ}{\mathcal{Q}}
\nc{\calR}{\mathcal{R}}
\nc{\calS}{\mathcal{S}}
\nc{\calT}{\mathcal{T}}
\nc{\calU}{\mathcal{U}}
\nc{\calV}{\mathcal{V}}
\nc{\calW}{\mathcal{W}}
\nc{\calX}{\mathcal{X}}
\nc{\calY}{\mathcal{Y}}
\nc{\calZ}{\mathcal{Z}}
\nc{\KK}{\mathbb{K}}
\nc{\FF}{\mathbb{F}}
\begin{document}

\begin{abstract}
We exhibit closed hyperbolic surfaces of genus $10$,  $17$, and $37$ such that the multiplicity of the first nonzero eigenvalue of their Laplacian is larger than the maximum conjectured by Yves Colin de Verdi\`ere in 1986. In order to determine these multiplicities, we apply the twisted Selberg trace formula to the representations induced by the isometry groups of these surfaces on corresponding triangle groups. 
\end{abstract}

\maketitle

\section{Introduction}

For a closed, connected, Riemannian manifold $X$ of dimension at least $1$, let $\lambda_1(X)$ be the smallest nonzero eigenvalue of its Laplacian acting on smooth real-valued functions and let $m_1(X)$ be its multiplicity, that is, the dimension of the corresponding eigenspace. Given a closed, connected, smooth manifold $\Sigma$ of dimension at least $1$, consider
\[\overline{m}_1(\Sigma):= \sup \left\{ m_1(X)  : X \text{ is a Riemannian manifold homeomorphic to }\Sigma \right\}.\]

In \cite{CdV86} and \cite{CdV87}, Colin de Verdi\`ere conjectured that $\overline{m}_1(\Sigma) = \chr(\Sigma) - 1$, where the \emph{chromatic number} $\chr(\Sigma)$ is defined as the supremum of the natural numbers $n$ such that the complete graph on $n$ vertices embeds in $\Sigma$. By results from \cite{Heawood}, \cite{RY}, and \cite{4color1,4color2}, every finite graph embeddable in a closed surface $\Sigma$ can be colored with at most $\chr(\Sigma)$ colors so that adjacent vertices have different colors, hence the name.

 Colin de Verdi\`ere's conjecture is true if $\dim(\Sigma)= 1$, if $\dim(\Sigma)\geq 3$ \cite{CdV86}, or if $\Sigma$ is the $2$-sphere \cite{ChengMultiplicity}, the $2$-torus \cite{Besson}, the projective plane \cite{Besson}, or the Klein bottle \cite{CdV87,NadirashviliMultiplicite}. Many efforts were made to try to prove the conjecture in general \cite{Besson2,Sevennec,LetrouitMachado}, but to no avail. We will show that the conjecture is in fact false.

\begin{thm} \label{thm:main}
There exist closed, connected, orientable, hyperbolic surfaces $X_{10}$, $X_{17}$, and $X_{37}$ of genus $10$, $17$, and $37$ respectively, satisfying $m_1(X_{10})=16> 13 = \chr(X_{10})-1$, $m_1(X_{17})=21>16 = \chr(X_{17})-1$, and $m_1(X_{37})\geq 24 > 23 = \chr(X_{37})-1$.
\end{thm}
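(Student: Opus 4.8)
The plan is to realize $X_{10}$ and $X_{17}$ as surfaces with very large symmetry and to analyze the action of their isometry groups on the first eigenspace. First I would search, guided by character tables of small groups and by numerical estimates of $\lambda_1$, for a cocompact triangle group $\Delta=\Delta(p,q,r)$ (extended by a reflection when orientation-reversing isometries are wanted) together with a torsion-free normal subgroup $\Gamma\triangleleft\Delta$ of finite index such that $X=\HH/\Gamma$ has genus $10$ (resp.\ $17$). By Riemann--Hurwitz this fixes the order of $G:=\Delta/\Gamma$, which is a subgroup of -- and, for a suitable choice, equal to -- $\Isom(X)$. Among the irreducible real representations of $G$ I would single out one, $\rho_0$, of real dimension $16$ (resp.\ $21$), and the theorem reduces to proving that the $\lambda_1$-eigenspace of $X$ is isomorphic to $\rho_0$ as a $G$-representation.

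Since the Laplacian commutes with $G$, there is an orthogonal, Laplacian-preserving decomposition
\[
L^2(X)\;\cong\;\bigoplus_{\rho\in\Irr(G)} V_\rho\otimes L^2(Y;\mathcal F_\rho),
\]
where $Y=\HH/\Delta$ is the triangle orbifold quotient (in the presence of reflections one passes to its orientation cover) and $\mathcal F_\rho\to Y$ is the flat orbifold bundle attached to $\Delta\twoheadrightarrow G\xrightarrow{\rho}\GL(V_\rho)$; accordingly $\spec(\Delta_X)$ is the union over $\rho$ of $\spec(\Delta_{Y,\mathcal F_\rho})$, each eigenvalue appearing with its multiplicity on $L^2(Y;\mathcal F_\rho)$ times $\dim V_\rho$. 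Determining $m_1(X)$ thus comes down to three points: (a) $\lambda_1(X)=\lambda_1(Y;\mathcal F_{\rho_0})$, i.e.\ the smallest positive eigenvalue of $X$ occurs in the $\rho_0$-block; (b) it occurs in no other block; (c) it has multiplicity one inside the $\rho_0$-block.

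I would establish (a)--(c) with the twisted Selberg trace formula. For an even test function $h$, holomorphic and rapidly decaying in a horizontal strip, with geodesic transform $g=\widehat h$, it reads
\[
\sum_{j}h(r_j^\rho)\;=\;\frac{\dim V_\rho\cdot\area(Y)}{4\pi}\int_{\RR}h(r)\,r\tanh(\pi r)\,dr\;+\;E_\rho(h)\;+\;\sum_{\gamma}\frac{\len{\gamma_0}}{2\sinh(\len{\gamma}/2)}\,\tr{\rho(\gamma)}\,g(\len{\gamma}),
\]
where $\lambda_j^\rho=\tfrac14+(r_j^\rho)^2$, the term $E_\rho(h)$ collects the elliptic contributions of the cone points of $Y$ (explicit in their angles and in the traces of $\rho$ on the corresponding elliptic classes of $\Delta$), and the last sum runs over the closed geodesics of $Y$, with $\gamma_0$ the underlying primitive geodesic, weighted by the trace of $\rho$ on the image in $G$ of the associated $\Delta$-conjugacy class. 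For (a) and (c) I would feed in, for $\rho=\rho_0$, a test function with $h\ge\ind_{[0,L]}$ as a function of $\lambda=\tfrac14+r^2$, with $L$ an explicit threshold just above the numerically observed $\lambda_1(X)$; the left-hand side then dominates the number of eigenvalues of $\Delta_{Y,\mathcal F_{\rho_0}}$ in $[0,L]$, and checking that the (computable) right-hand side lies strictly between $1$ and $2$ forces exactly one such eigenvalue, which a complementary test function certifies to be the asserted $\lambda_1(X)$. For (b), and to confirm that this $\rho_0$-eigenvalue genuinely lies below $L$, I would run the classical Selberg-type gap estimate block by block over the finitely many remaining $\rho$ -- including the trivial block, where the eigenvalue $0$ must be set aside -- certifying $\lambda_1(Y;\mathcal F_\rho)>L$ each time. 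Granting this gives $m_1(X_{10})=16$ and $m_1(X_{17})=21$; combined with the Heawood/Ringel--Youngs values $\chr(X_{10})=14$ and $\chr(X_{17})=17$ (from $\chr(\Sigma_g)=\floor{(7+\sqrt{1+48g})/2}$), the inequalities $16>13$ and $21>16$ finish the proof.

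The main obstacle is to make these trace-formula comparisons rigorous rather than merely numerical. This requires enumerating the closed geodesics of $Y$ up to a cutoff length together with the $G$-conjugacy classes they represent -- equivalently, controlling the relevant trace/word data in the triangle group -- evaluating $\tr{\rho(\gamma)}$ on those classes for every relevant $\rho$, and bounding the tail of the geodesic sum by an effective prime geodesic theorem, all the while keeping $h$ positive enough that each of the inequalities above closes with room to spare, given that the true spectral gaps in play are rather small. Choosing $\Delta$, $\Gamma$ and the family of test functions so that these competing constraints are simultaneously met is the crux of the matter.
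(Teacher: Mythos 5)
Your outline follows the same core strategy as the paper: realize the surfaces as quotients by torsion-free normal subgroups of the triangle groups $T(2,3,8)$ and $T(2,3,7)$ (these are exactly the paper's $X_{10}$, $X_{17}$, taken from Conder's list), decompose the spectrum into twisted Laplacians on the triangle orbifold indexed by $\Irr(G)$, and use the twisted Selberg trace formula block by block with rigorously enumerated geodesic data. The difference is in how the endgame is closed, and here the paper's route is noticeably lighter than yours. You propose to certify, inside the top-dimensional block $\rho_0$, both existence and simplicity of an eigenvalue below the threshold $L$ by two-sided trace-formula counting (a majorant giving ``fewer than $2$'' and a complementary minorant giving ``at least $1$''), and to run a gap certification in every other block; this also forces you to decide, in genus $17$, which of the two $21$-dimensional real irreps carries $\lambda_1$. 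The paper avoids all of this: it only needs the one-sided exclusion criterion (Proposition 3.1, with the single test function $f_d$ whose Fourier transform is nonnegative) applied to the irreps of dimension less than $16$, resp.\ $21$, on the interval $(0,\lambda_1^{\max}]$, where $\lambda_1^{\max}$ is the previously published upper bound on $\lambda_1(X_g)$; existence of an eigenvalue in the window then comes for free from that upper bound, and simplicity of the top-dimensional contribution comes from a previously known upper bound on $m_1$ ($\leq 20$ and $\leq 34$, or S\'evennec's bound), which rules out $16k$ or $21k$ with $k\geq 2$. Your version is workable in principle (it is the Booker--Str\"ombergsson style of argument the authors mention), but it demands sharper test functions and tighter numerics, and your reliance on an effective prime geodesic theorem to control the tail of the hyperbolic sum is unnecessary: choosing a compactly supported $f$ (the paper's fourfold convolution of an indicator, supported in $[-3,3]$) makes the geodesic side a finite sum over conjugacy classes of translation length at most $3$, which is exactly what their automaton-plus-interval-arithmetic enumeration supplies. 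So: correct in outline, same method at heart, but you should replace the two-sided counting and tail estimates by the cheaper combination of one-sided exclusion plus the known bounds on $\lambda_1$ and on $m_1$.
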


Note that the chromatic number of a closed connected surface $\Sigma$ different from the Klein bottle is $\left \lfloor \frac12\left( 7 + \sqrt{49 - 24 \chi(\Sigma)} \right) \right \rfloor$ by a result of Ringel and Youngs \cite{RY}, where $\chi$ is the Euler characteristic. The chromatic number of the Klein bottle is equal to $6$ rather than $7$. In particular, if $\Sigma$ is orientable of genus $g$, then its chromatic number is equal to $\left \lfloor \frac12\left( 7 + \sqrt{48 g +1} \right) \right \rfloor$, which yields the right-hand side equalities in Theorem \ref{thm:main}.

The surfaces $X_{10}$ and $X_{17}$ in \thmref{thm:main} are $(2,3,8)$- and $(2,3,7)$-triangle surfaces which happen to be normal covers of the Bolza and Klein surfaces in genus $2$ and $3$, respectively. They were discovered by \'Emile Gruda-Mediavilla with the help of Mathieu Pineault during a summer research project supervised by Maxime Fortier Bourque.  The goal of the project was to compute $\lambda_1$ and $m_1$ numerically for all triangle surfaces of genus $2$ to $20$ using the computer program \texttt{FreeFEM++}. The largest approximate multiplicity we found in each genus is shown in \figref{fig:mult} together with Colin de Verdi\`ere's conjectured maximum and the upper bound from \cite{LPbounds} (valid for hyperbolic surfaces only).

The surface $X_{37}$ is also a $(2,3,8)$-triangle surface. It was discovered later on using the techniques developed in this paper to rigorously compute the multiplicity of the first nonzero Laplacian eigenvalue for $X_{10}$ and $X_{17}$. Curiously, Riemann surfaces of genus $10$, $17$, and $37$ also appear in \cite{crystallographic}, but we do not know if they have any connection with $X_{10}$, $X_{17}$, and $X_{37}$.

\begin{figure}
\includegraphics[scale=0.75]{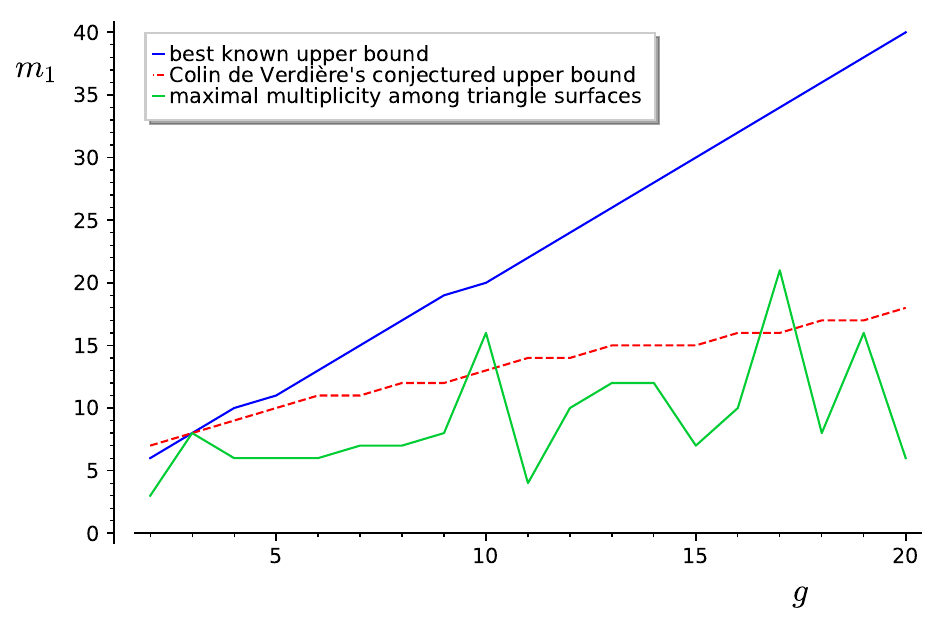}
\caption{Upper and lower bounds on the largest multiplicity of $\lambda_1$ among hyperbolic surfaces of genus $g$ between $2$ and $20$. The green curve is based on numerical results that have been verified rigorously in genus $2$ and $3$ \cite{Klein}, 7 \cite{Lee}, $10$ and $17$ (present paper).}  \label{fig:mult}
\end{figure}

\subsection{Outline of proof}

We now describe how we prove that $m_1(X_{10})$ and $m_1(X_{17})$ are as large as observed numerically. These techniques also show that $m_1(X_{37})\geq 24$ and will be applied in a forthcoming paper to confirm the remaining multiplicities in \figref{fig:mult}. 

In \cite{Klein}, it was shown that the Bolza surface $B$ and the Klein quartic $K$ satisfy $m_1(B)=3$ and $m_1(K)=8$ using the representation theory of their isometry groups and the Selberg trace formula. The idea is that the eigenspaces of the Laplacian can be decomposed into direct sums of irreducible representations (irreps) of the isometry group (or any of its subgroups). If the irreducible representations of small dimension can be ruled out from appearing in the eigenspace corresponding to $\lambda_1$, that forces $m_1$ to be large. The proof of \thmref{thm:main} is based on the same general idea but uses improved methods to exclude representations.

Here is an elementary way to rule out most of the irreps for $X_{10}$ and $X_{17}$. We will not use this argument in the actual proof, so we skip some details. The surfaces $X_{10}$ and $X_{17}$ happen to be chiral, meaning that all their isometries are orientation-preserving. The dimensions of the irreducible representations of $\Isom^+(X_{10})$ and $\Isom^+(X_{17})$ over $\CC$ are
\[
1, 1, 2, 2, 2, 3, 3, 4, 8, 8, 16 \quad \text{and} \quad
1, 3, 3, 6, 7, 7, 7, 8, 14, 21, 21
\]
(see Appendix \ref{app_chartables}) while the corresponding dimensions for $\Isom^+(B)$ and $\Isom^+(K)$ are
\[
1, 1, 2, 2, 2, 3, 3, 4
\quad \text{and} \quad
1, 3, 3, 6, 7, 8
.\]
There is a reason why the second lists are subsets of the first. Since the normal orbifold cover $X_{10}\to \Isom^+(X_{10})\backslash X_{10}$ factors as $X_{10} \to B \to  \Isom^+(B) \backslash B$, there is a surjective homomorphism $\Isom^+(X_{10})\to\Isom^+(B)$. As such, any irreducible representation of $\Isom^+(B)$ induces an irreducible representation of $\Isom^+(X_{10})$ of the same dimension via precomposition with this epimorphism. Suppose that an irrep $(\rho,V)$ of $\Isom^+(X_{10})$ induced by $\Isom^+(B)$ appears in an eigenspace of the Laplacian on $X_{10}$. Then the eigenfunctions in $V$ descend to eigenfunctions on $B$ with the same eigenvalue because the kernel of the homomorphism $\Isom^+(X_{10})\to\Isom^+(B)$ is precisely the deck group of the cover $X_{10}\to B$. The same statements hold for $X_{17}$ and $K$. It folllows that any non-trivial eigenvalue of the Laplacian on $X_{10}$ or $X_{17}$ coming from such induced representations must therefore be at least as large as $\lambda_1(B)\approx 3.838$ or $\lambda_1(K)\approx 2.678$ respectively. However, previous upper bounds from \cite[Table 3]{LPbounds} imply that $\lambda_1(X_{10})\leq 1.223$ and $\lambda_1(X_{17})\leq 0.969$. Hence, this already rules out most of the irreducible representations from appearing in the first eigenspace. For $X_{10}$ there are only two irreps of dimension $8$ left to exclude and for $X_{17}$ there are two of dimension $7$ and one of dimension $14$.

How does one rule out these additional irreps? In \cite{Klein} and \cite{LPbounds}, Courant's nodal theorem was used to exclude $1$-dimensional irreps under certain conditions and some ad hoc methods were used to exclude $2$-dimensional irreps for the Bolza surface, following \cite{Cook} and \cite{JenniThesis}. The Selberg trace formula was also used to prove bounds on the number of eigenvalues in certain intervals for the Klein quartic, yielding the result that $m_1(K)=8$. However, this method would not work here since there could be two nearby eigenvalues whose multiplicities add up to $16=8+8$ or $21=7+14$.

The additional ingredient used here is the \emph{twisted} Selberg trace formula. This version of the trace formula yields information on the eigenvalues associated with individual re\-presentations of the group of orientation-preserving isometries. By applying it to the irreducible re\-presentations of non-maximal dimension with suitable test functions, we show that the corresponding eigenvalues are strictly larger than the first positive eigenvalue of the surface. This implies that $m_1(X_{10}) \geq 16$ and $m_1(X_{17}) \geq 21$, and previous upper bounds on multiplicity imply that equality holds in both cases. Note that this method rules out all the irreducible representations except the top-dimensional ones, so the above argument on induced representations is not needed in the end.

\subsection{Notes and comments}

We conclude  the introduction by pointing out that although Colin de Verdi\`ere's conjecture turns out to be false, it would be interesting to know if it is asymptotically true. In particular, is it true that $\overline{m}_1(\Sigma) = O(\sqrt{|\chi(\Sigma)|}+1)$? 

The best known constructions \cite{BurgerColbois,CCV} achieve this rate of growth with hyperbolic surfaces. To achieve higher multiplicity than in \cite{BurgerColbois} using representation theory, one would need to build regular covers of a base orbifold where several large-dimensional irreps of the deck group are forced to appear together in the first eigenspace. Just one irrep  is not enough because their dimension is bounded by the square root of the cardinality of the group, which is proportional to the Euler characteristic.

The best known upper bound on $\overline{m}_1$  without restrictions on the metric when $\chi(\Sigma)<0$ is $5 - \chi(\Sigma)$ \cite{Sevennec}. This was improved to $2g-1$ for hyperbolic surfaces of large enough genus $g$ in \cite{LPbounds} and a sublinear upper bound was obtained in \cite{LetrouitMachado} for pinched negatively curved surfaces with injectivity radius bounded away from zero.

\begin{acknowledgements}
We thank Jennifer Paulhus and Aaron Wooton for suggesting to look at other genera of the form $g=k^2+1$, which led to the discovery of $X_{37}$. We also thank the \texttt{FreeFEM++} \cite{FreeFEM}, \texttt{SageMath} \cite{sagemath}, \texttt{Arb} \cite{Arb}, and \texttt{GAP} \cite{GAP} developers for making the calculations in this paper possible.
\end{acknowledgements}

\begin{funding}
MFB was partially supported by a Discovery Grant from the Natural Sciences and Engineering Research Council of Canada (NSERC), EGM was supported by an Undergraduate Student Research Award from NSERC, and MP was supported by a Master's Training Scholarship from the Fonds de Recherche du Québec Nature et Technologies and a Canada Graduate Scholarship from NSERC.
\end{funding}

\section{Spectral theory of co-compact Fuchsian groups}\label{sec_spectral}

In this section, we review some of the spectral theory of cocompact Fuchsian groups. Our discussion is mostly based on \cite{Hejhal_vol1} and \cite{TwistedLaplacians}, to which we refer for details and a more complete overiew. Our standing assumptions will be:
\begin{itemize}
\item $\Gamma < \PSL(2,\RR)$ is a co-compact (but not necessarily torsion-free) Fuchsian group,
\item $\Lambda \trianglelefteq \Gamma$ is a normal subgroup of finite index, 
\item $G = \Gamma / \Lambda$ is the quotient group,
\item $\rho:\Gamma \to G$ is the quotient map.
\end{itemize}
These hypotheses imply that there is a normal branched cover $\Lambda\backslash\HH^2 \to \Gamma\backslash\HH^2$ with deck group $G$. In the cases that interest us, $\Lambda\backslash\HH^2$ will be a triangle surface, $G$ its group of orientation-preserving isometries, and $\Gamma$ the corresponding triangle group, so that the quotient $\Gamma\backslash\HH^2$ is the double of a hyperbolic triangle across its boundary.

\subsection{Twisted Laplacians}

Let $C^{\infty}(\Lambda\backslash\HH^2)$ be the space of smooth $\CC$-valued $\Lambda$-invariant functions on $\HH^2$, let $\Delta$ be the Laplacian acting on such functions, and let $\spec(\Lambda\backslash\HH^2)$ denote the spectrum of $\Delta$ as a multiset, where eigenvalues are listed with multiplicity.

Usually, the Laplacian is taken to act on real-valued functions, but the definition extends to complex-valued functions by linearity. Since the Laplacian is self-adjoint and commutes with complex conjugation, its eigenvalues are real and its eigenspaces admits bases consisting of real-valued eigenfunctions. It follows that the spectrum of $\Delta$ acting on complex-valued functions is the same as when acting on real-valued functions. We work over $\CC$ as this is more convenient for representation theory. 

The group $G$ acts on $\Lambda \backslash \HH^2$ by isometries so that the induced action on $C^\infty(\Lambda\backslash\HH^2)$ by precomposition commutes with the Laplacian. This implies that the eigenspaces of the Laplacian can be decomposed into irreducible representations of $G$. We will describe this decomposition using twisted Laplacians. Since the quotient of $\Lambda \backslash \HH^2$ by $G$ is $\Gamma \backslash \HH^2$, the objects in question will be defined over that smaller quotient. 

Let $V$ be a finite-dimensional vector space over $\CC$ equipped with a Hermitian inner product and let $\varphi : \Gamma \to \GL(V)$
be a unitary representation. We write $C^\infty(\Gamma\backslash\HH^2,\varphi)$ for the space of smooth functions $F:\HH^2\to V$ such that
\[
F(\gamma(z)) = \varphi(\gamma)(F(z)), \quad \text{for all }z\in\HH^2 \text{ and all } \gamma\in\Gamma.
\]
We can extend the definition of the Laplacian to $C^\infty(\Gamma\backslash\HH^2,\varphi)$ by making it act coordinate-wise with respect to an orthonormal basis for $V$ and this \emph{twisted Laplacian} is denoted by $\Delta_\varphi$. The hypothesis that $\varphi$ is unitary is used so that $\varphi(\gamma)$ is an isometry, hence commutes with $\Delta_\varphi$, so that $\Delta_\varphi F$ satisfies the same equivariance property as $F$, i.e., belongs to $C^\infty(\Gamma\backslash\HH^2,\varphi)$ as well. We write $\spec(\Gamma\backslash\HH^2,\varphi)$ for the spectrum of $\Delta_\varphi$.

If $\varphi = \phi\circ \rho$ is induced by a unitary representation $\phi:G \to \GL(V)$ via the quotient map $\rho: \Gamma \to G$, then we write $\spec(\Gamma\backslash\HH^2,\phi)$ instead of $\spec(\Gamma\backslash\HH^2,\varphi)$. We will also write $\dim_\CC(\phi):=\dim_\CC(V)$ for the degree of the representation $\phi$.

\subsection{The decomposition}

Let $\Irr(G)$ be the set of (equivalence classes of) irreducible representations of $G$ over $\CC$, chosen to be unitary (which is always possible by Weyl's trick). The decomposition of $\spec(\Lambda\backslash\HH^2)$ into irreducible representations of $G$ can be written as follows. 

\begin{prop}\label{prp_specdecomp}
If $\Lambda \trianglelefteq \Gamma$ are co-compact Fuchsian groups and $G = \Gamma/\Lambda$ is finite, then
\[
\spec(\Lambda\backslash\HH^2) = \bigcup_{\phi \in \Irr(G)} \dim_\CC(\phi) \cdot  \spec(\Gamma\backslash\HH^2,\phi)
\]
as multisets. 
\end{prop}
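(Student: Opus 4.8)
The plan is to treat each eigenspace of $\Delta$ on $\Lambda\backslash\HH^2$ as a representation of $G$ and to match its isotypic pieces with the eigenspaces of the twisted Laplacians $\Delta_\phi$. First I would record the $G$-action: for $g\in G$ and any lift $\tilde g\in\Gamma$, the formula $(g\cdot f)(z)=f(\tilde g^{-1}z)$ defines a linear action of $G$ on $C^\infty(\Lambda\backslash\HH^2)$, independent of the chosen lift because $f$ is $\Lambda$-invariant. It is precomposition with isometries of $\HH^2$, hence commutes with $\Delta$. By the standard spectral theory of the Laplacian on the compact quotient $\Lambda\backslash\HH^2$, the spectrum is discrete and each eigenspace $E_\mu=\{f:\Delta f=\mu f\}$ is finite dimensional; being $\Delta$-invariant, $E_\mu$ is a finite-dimensional sub-$G$-representation. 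Since the multiplicity of $\mu$ in $\spec(\Lambda\backslash\HH^2)$ is $\dim_\FF E_\mu$, it suffices to prove, for each $\mu$, that $\dim_\FF E_\mu=\sum_{\phi\in\Irr(G)}\dim_\FF(\phi)\cdot m_\phi(\mu)$, where $m_\phi(\mu)$ is the multiplicity of $\mu$ in $\spec(\Gamma\backslash\HH^2,\phi)$.

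The core of the argument is an identification of twisted function spaces with spaces of invariants. Writing functions componentwise with respect to an orthonormal basis of $V_\phi$ (as in the definition of $\Delta_\phi$), I would verify that
\[
C^\infty(\Gamma\backslash\HH^2,\phi)\;\cong\;\bigl(C^\infty(\Lambda\backslash\HH^2)\otimes_\FF V_\phi\bigr)^G,
\]
where $G$ acts diagonally — by the translation action above on the first factor and by $\phi$ on the second — and where the isomorphism carries $\Delta_\phi$ to $\Delta\otimes\id_{V_\phi}$. Indeed, a smooth $F\colon\HH^2\to V_\phi$ with $F(\gamma z)=\phi(\rho(\gamma))F(z)$ for all $\gamma\in\Gamma$ is precisely a $\Lambda$-invariant $V_\phi$-valued function, i.e.\ an element of $C^\infty(\Lambda\backslash\HH^2)\otimes V_\phi$, that moreover satisfies $F(\tilde g z)=\phi(g)F(z)$ for every $g\in G$ and lift $\tilde g$; this last condition says exactly that $F$ is fixed by the diagonal action $(g\star F)(z)=\phi(g)\bigl(F(\tilde g^{-1}z)\bigr)$, which is again well defined because $F$ is $\Lambda$-invariant. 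Since $\Delta$ is invariant under isometries of $\HH^2$, the operator $\Delta\otimes\id$ commutes with this diagonal action and so preserves its invariants, giving the asserted intertwining. Restricting to a fixed eigenvalue, the $\mu$-eigenspace of $\Delta\otimes\id$ on $C^\infty(\Lambda\backslash\HH^2)\otimes V_\phi$ is $E_\mu\otimes V_\phi$, so
\[
\{F\in C^\infty(\Gamma\backslash\HH^2,\phi):\Delta_\phi F=\mu F\}\;\cong\;(E_\mu\otimes_\FF V_\phi)^G\;\cong\;\Hom_G(V_\phi^*,E_\mu),
\]
and therefore $m_\phi(\mu)=\dim_\FF\Hom_G(V_\phi^*,E_\mu)$.

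It then remains to feed this into the character theory of the finite group $G$. Over $\FF=\CC$, the Hermitian form on $V_\phi$ identifies $V_\phi^*$ with $\overline{V_\phi}$, so $m_\phi(\mu)$ is the multiplicity of $\overline\phi$ in $E_\mu$; summing $\sum_\phi\dim_\CC(\phi)\,m_\phi(\mu)$ and reindexing by $\psi=\overline\phi$ (with $\dim\psi=\dim\phi$) gives $\sum_\psi\dim_\CC(\psi)\cdot(\text{mult.\ of }\psi\text{ in }E_\mu)=\dim_\CC E_\mu$, which is exactly the multiplicity of $\mu$ in $\spec(\Lambda\backslash\HH^2)$; equivalently, coordinatewise complex conjugation is a $\Delta$-commuting isomorphism $C^\infty(\Gamma\backslash\HH^2,\phi)\to C^\infty(\Gamma\backslash\HH^2,\overline\phi)$, so whether $\phi$ or $\overline\phi$ contributes is immaterial. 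Taking the union over $\mu$ yields the claimed multiset identity. The point demanding the most care — and the reason $\CC$ is the more convenient field here — is Schur's lemma over $\RR$: there $\operatorname{End}_G(V_\phi)$ may be $\RR$, $\CC$, or $\mathbb{H}$ (real, complex, or quaternionic type) and the isotypic decomposition reads $E_\mu\cong\bigoplus_\phi V_\phi\otimes_{\operatorname{End}_G(V_\phi)}\Hom_G(V_\phi,E_\mu)$, so the dimension count must keep track of these division algebras; beyond this, and the (entirely standard) discreteness and finite multiplicity of the spectrum on the compact quotient, the proof is formal.
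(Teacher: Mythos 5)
Your argument over $\CC$ is correct and is essentially the standard one (the paper does not reprove the proposition but refers to Cornelissen--Peyerimhoff, whose argument is of this type), so the issue is not the complex case. The genuine gap is the case $\FF=\RR$, which is part of the statement and is the case the paper actually uses later, and which you defer with the remark that beyond keeping track of the division algebras ``the proof is formal.'' It is not: carrying out your own bookkeeping shows the identity, read literally with multiplicities equal to $\RR$-dimensions of eigenspaces, does not follow. Your identification gives $m_\phi(\mu)=\dim_\RR\Hom_G(V_\phi^*,E_\mu)=\dim_\RR\Hom_G(V_\phi,E_\mu)=n_\phi\cdot\dim_\RR\operatorname{End}_G(V_\phi)$, where $n_\phi$ is the multiplicity of $V_\phi$ in the isotypic decomposition of $E_\mu$. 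Hence $\sum_\phi\dim_\RR(\phi)\,m_\phi(\mu)=\sum_\phi n_\phi\,\dim_\RR(\phi)\,\dim_\RR\operatorname{End}_G(V_\phi)$, whereas $\dim_\RR E_\mu=\sum_\phi n_\phi\,\dim_\RR(\phi)$; these agree only if every $\phi$ occurring in $E_\mu$ is of real type, and for complex or quaternionic type the left-hand side overcounts by a factor $2$ or $4$. A toy example already exhibits this: for $G=\ZZ/3\ZZ$ and its $2$-dimensional real irrep $W$, if $E_\mu\cong W$ then the $\mu$-eigenspace of $\Delta_W$ is $(E_\mu\otimes W)^G\cong\operatorname{End}_G(W)\cong\CC$, of real dimension $2$, so $\dim_\RR(W)\cdot m_W(\mu)=4>2=\dim_\RR E_\mu$. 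Nor is the phenomenon hypothetical for the groups in the paper: the $4$-dimensional real irrep of $G_{10}$ coming from $\chi_4,\chi_5$ and the $6$-dimensional real irrep of $G_{17}$ coming from $\chi_2,\chi_3$ both have $\operatorname{End}_G\cong\CC$.

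So to finish you must do one of the following: restrict to $\FF=\CC$; or, over $\RR$, count multiplicities in $\spec(\Gamma\backslash\HH^2,\phi)$ as dimensions over the division algebra $\operatorname{End}_G(V_\phi)$ (equivalently, divide your $m_\phi(\mu)$ by $\dim_\RR\operatorname{End}_G(V_\phi)$) and say explicitly that this is the convention being used; or prove only the weaker statement that is needed downstream, namely that the multiplicity of $\mu$ in $\spec(\Lambda\backslash\HH^2)$ equals $\sum_\phi n_\phi\dim_\RR(\phi)$ and that $n_\phi>0$ forces $\mu\in\spec(\Gamma\backslash\HH^2,\phi)$. Everything before that point --- the well-definedness of the $G$-action, the isomorphism $C^\infty(\Gamma\backslash\HH^2,\phi)\cong\bigl(C^\infty(\Lambda\backslash\HH^2)\otimes V_\phi\bigr)^G$ intertwining $\Delta_\phi$ with $\Delta\otimes\id$, and the character-theoretic count over $\CC$ including the $\phi\leftrightarrow\overline\phi$ reindexing --- is correct.
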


Here, multiplying a multiset by $d$ means multiplying the multiplicities by $d$. For a proof of this proposition, see for instance \cite[Sections 3.8 and 3.9]{TwistedLaplacians}. Technically, our set-up is slightly different from that of Cornelissen and Peyerimhoff: they consider the spectrum of the Laplacian on a Riemannian manifold. However, their proof goes through for orbifolds as well. The only minor difference is how the spaces $C^\infty(\Lambda\backslash\HH^2)$ and $C^\infty(\Gamma\backslash\HH^2,\varphi)$ are defined.

\subsection{The twisted Selberg trace formula}

In order to access the spectra appearing on the right-hand side of Proposition \ref{prp_specdecomp}, we will use the twisted Selberg trace formula. To state it, we need some further notation.

Given an integrable function  $f:\RR\to\CC$, its Fourier transform $\what{f}$ is given by 
\[
\what{f}(y) = \int_{-\infty}^\infty f(x)\; e^{-i y \cdot x} \,dx.
\]
We will call $f$ \emph{admissible} if it is even and there exists an $\eps>0$ such that $\what{f}$ is holomorphic on a strip $\calS_\eps = \{z\in \CC;\;\abs{\mathrm{Im}(z)} < \frac{1}{2}+\eps\}$ and 
\[
\what{f}(y) = O\left( (1+\abs{y})^{-2-\eps}\right)
\]
in $\calS_\eps$. 

If $\Gamma < \PSL(2,\RR)$ is a cocompact Fuchsian group, $\varphi : \Gamma \to \GL(V)$ is a finite-dimensional unitary representation, and $f$ is an admissible function, then the \emph{twisted Selberg trace formula} (see \cite[p. 351]{Hejhal_vol1}) states that
\begin{align}\label{eq_STF} 
\sum_{\lambda \in \spec(\Gamma\backslash\HH^2,\varphi)} \what{f}\left(\sqrt{\lambda - \frac{1}{4}}\right) &=  \dim_\CC(\varphi) \frac{\mathrm{area}(\Gamma\backslash \HH^2)}{4\pi} \int_{-\infty}^\infty y \what{f}(y) \tanh(\pi y)\,dy \\ \notag
 &\quad + \sum_{[\gamma] \in \calE(\Gamma)} \frac{\tr{\varphi(\gamma)}}{2 m(\gamma) \sin(\theta(\gamma))} \int_{-\infty}^\infty \frac{e^{-2\theta(\gamma) y}}{1+e^{-2\pi y}} \what{f}(y)\,dy \\ \notag
  &\quad + \sum_{[\gamma] \in \calH'(\Gamma)} \ell(\gamma)\; \sum_{n\geq 1} \frac{\tr{\varphi(\gamma^n)}}{2\sinh(n\ell(\gamma)/2)} f(n\ell(\gamma))
\end{align}

Here,
\begin{itemize}
\item $\calE(\Gamma)$ denotes the set of conjugacy classes of elliptic elements in $\Gamma$,
\item for an elliptic element $\gamma\in\Gamma$, $m(\gamma)$ denotes the order of the centralizer of $\gamma$ in $\Gamma$ (which in this case is the largest cyclic subgroup containing $\gamma$) and $\theta(\gamma)$ denotes half the angle of rotation of $\gamma$, i.e., is such that $\gamma$ is conjugate to
\[
\left[ \begin{array}{cc} \cos(\theta(\gamma)) & \sin(\theta(\gamma)) \\ - \sin(\theta(\gamma)) & \cos(\theta(\gamma)) \end{array}\right] \text{ in } \PSL(2,\RR),
\]
\item  $\calH'(\Gamma)$ denotes the set of conjugacy classes of primitive hyperbolic elements in $\Gamma$,
\item for a hyperbolic element $\gamma\in\Gamma$, $\ell(\gamma)$ denotes its translation length on $\HH^2$.
\end{itemize}
Moreover, on the left-hand side (the spectral side) we may use any of the two branches of the square root, because $\what{f}$ is even.

We will write $\calG(\Gamma,\phi,f)$ for the right-hand side (the geometric side) of the twisted Selberg trace formula \eqref{eq_STF} in the case that $\varphi$ is induced by $\phi: G \to \GL(V)$.

\subsection{Simplifications in the trace formula}\label{sec_simplified_terms}

It is possible to express the geometric side of the trace formula entirely in terms of the function $f$ (instead of its Fourier transform $\what{f}$). In our application, $f$ will be compactly supported, so writing the geometric side in terms of it will make the integrals that appear easier to bound rigorously. 

The identity and elliptic terms on the geometric side of \eqref{eq_STF} can be written as:
\[
\int_{-\infty}^\infty y \what{f}(y)\tanh(\pi y) dy = -\int_{-\infty}^\infty \frac{f'(x)}{\sinh(x/2)}dx 
\]
and
\[
\frac{1}{2\sin(\theta)}\int_{-\infty}^\infty \frac{e^{-2\theta\cdot y}}{1+e^{-2\pi y}} \what{f}(y) dy = \int_0^\infty  \frac{\cosh(x/2)}{\cosh(x)-1+2 \sin(\theta)^2} f(x) dx,
\]
see \cite[p. 27-28, 450]{Hejhal_vol1}.

\section{Excluding representations}

In this section, we explain how to prove lower bounds on the eigenvalues of the Laplacian twisted by an irreducible representation $\phi$. If $\spec(\Gamma\backslash \HH^2),\phi)$ is disjoint from an interval $I$, then $\phi$ cannot appear in any eigenspace of $\Delta$ on $\Lambda\backslash \HH^2$ corresponding to an eigenvalue in $I$ in view of \propref{prp_specdecomp}. We will apply this criterion in Section \ref{sec:proof} to exclude all but the top-dimensional irreducible representations from appearing in the eigenspaces corresponding to $\lambda_1(X_{10})$, $\lambda_1(X_{17})$, and $\lambda_1(X_{37})$, where $X_{10}$, $X_{17}$, and $X_{37}$ are the surfaces from Theorem \ref{thm:main}. 

\subsection{The criterion}

Our main tool for excluding representations is derived from the twisted Selberg trace formula. A similar criterion is behind the method of Booker--Str\"ombergsson \cite{BookerStrombergsson} to which we will come back below.  We still assume that $\Lambda \trianglelefteq \Gamma$ are co-compact Fuchsian groups and $G=\Gamma/\Lambda$ is finite as in Section \ref{sec_spectral}.

\begin{prp}\label{prp_linineq}
Let $\phi\in\Irr(G)$, let $\lambda>0$, and suppose that there exists an admissible function $f:\RR\to\RR$ such that $\what{f}\left(\sqrt{\mu-\frac{1}{4}}\right) \geq 0$ for all $\mu \geq 0$ and
\[
\what{f}\left(\sqrt{\lambda-\frac{1}{4}}\right) > \begin{cases}
\calG(\Gamma,\phi,f) & \text{if }\phi \text{ is non-trivial} \\
\calG(\Gamma,\phi,f)-\what{f}(i/2) & \text{if }\phi \text{ is trivial.}
\end{cases}
\]
Then $\lambda \not\in \spec(\Gamma\backslash\HH^2,\phi).$
\end{prp}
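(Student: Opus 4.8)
The plan is to apply the twisted Selberg trace formula \eqref{eq_STF} to the induced representation $\varphi=\phi\circ\rho$ and the given admissible function $f$, and then argue by contradiction. The whole argument is a positivity argument in the spirit of the Booker--Str\"ombergsson method: if $\lambda$ belonged to $\spec(\Gamma\backslash\HH^2,\phi)$, then the spectral side of \eqref{eq_STF} would be bounded below by $\what{f}(\sqrt{\lambda-\tfrac{1}{4}})$ --- with an extra contribution $\what{f}(i/2)$ from the eigenvalue $0$ when $\phi$ is trivial --- whereas the geometric side is by definition $\calG(\Gamma,\phi,f)$, and this would contradict the displayed strict inequality.

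First I would check that every term on the spectral side is nonnegative. The twisted Laplacian $\Delta_\varphi$ is a nonnegative operator: it acts coordinate-wise as the ordinary Laplacian, for which $\langle\Delta F,F\rangle=\int_{\Gamma\backslash\HH^2}|\nabla F|^2\geq 0$ on the compact orbifold $\Gamma\backslash\HH^2$. Hence every $\mu\in\spec(\Gamma\backslash\HH^2,\phi)$ satisfies $\mu\geq 0$, so $\what{f}(\sqrt{\mu-\tfrac{1}{4}})\geq 0$ by the first hypothesis on $f$ (note that for $0\leq\mu<\tfrac14$ the argument $\sqrt{\mu-\tfrac14}$ is purely imaginary of modulus at most $\tfrac12<\tfrac12+\eps$, hence still lies in the strip $\calS_\eps$ where $\what{f}$ is holomorphic). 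Admissibility of $f$, namely the decay $\what{f}(y)=O((1+|y|)^{-2-\eps})$, combined with Weyl's law makes this sum converge absolutely, so \eqref{eq_STF} is a genuine identity and discarding terms is legitimate.

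Next I would pin down exactly when the eigenvalue $0$ occurs. An element of $C^\infty(\Gamma\backslash\HH^2,\varphi)$ in the kernel of $\Delta_\varphi$ is constant by connectedness, hence of the form $z\mapsto v$ with $\varphi(\gamma)v=v$ for all $\gamma\in\Gamma$; since $\varphi(\Gamma)=\phi(G)$ and $\phi$ is irreducible, this forces $v=0$ unless $\phi$ is the trivial representation, in which case the $0$-eigenspace is one-dimensional. Thus $0\in\spec(\Gamma\backslash\HH^2,\phi)$ with multiplicity exactly $1$ if $\phi$ is trivial, and $0\notin\spec(\Gamma\backslash\HH^2,\phi)$ otherwise. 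In the trivial case this eigenvalue contributes $\what{f}(\sqrt{-\tfrac14})=\what{f}(i/2)$ to the spectral side, a well-defined real number since $i/2$ lies in $\calS_\eps$ and $\what{f}$ is even (so the value is independent of the branch of the square root).

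Finally I would assemble the contradiction. Suppose $\lambda\in\spec(\Gamma\backslash\HH^2,\phi)$. Since $\lambda>0$, the term $\mu=\lambda$ is distinct from the $\mu=0$ term, so keeping only $\mu=\lambda$ (together with $\mu=0$ in the trivial case) and using nonnegativity of all remaining terms yields
\[
\calG(\Gamma,\phi,f)=\sum_{\mu\in\spec(\Gamma\backslash\HH^2,\phi)}\what{f}\!\left(\sqrt{\mu-\tfrac{1}{4}}\right)\;\geq\;\what{f}\!\left(\sqrt{\lambda-\tfrac{1}{4}}\right)+\begin{cases}0 & \text{if }\phi\text{ is non-trivial,}\\ \what{f}(i/2) & \text{if }\phi\text{ is trivial,}\end{cases}
\]
which contradicts the inequality assumed for $f$. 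Hence $\lambda\notin\spec(\Gamma\backslash\HH^2,\phi)$. The argument is soft once the trace formula is in hand, so the only step requiring genuine care is the bookkeeping above --- identifying the $0$-eigenvalue in the trivial case and invoking admissibility to justify absolute convergence of the spectral side. The real difficulty, which belongs to the later sections rather than to this proposition, is to actually construct admissible $f$ satisfying both hypotheses for the relevant representations $\phi$ and eigenvalue thresholds $\lambda$, and to bound $\calG(\Gamma,\phi,f)$ rigorously.
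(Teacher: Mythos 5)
Your argument is correct and is essentially the paper's own proof: plug the hypothesis into the twisted Selberg trace formula, use nonnegativity of $\what{f}\bigl(\sqrt{\mu-\tfrac14}\bigr)$ to drop all spectral terms except $\mu=\lambda$ (and $\mu=0$ when $\phi$ is trivial), and contradict the assumed strict inequality. The extra bookkeeping you supply (nonnegativity of $\Delta_\varphi$, identification of the $0$-eigenspace for trivial $\phi$, absolute convergence of the spectral side) is consistent with, and slightly more detailed than, what the paper states.
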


\begin{proof}
Suppose for a contradiction that $\lambda \in \spec(\Gamma\backslash\HH^2,\phi)$. By the twisted Selberg trace formula \eqref{eq_STF} and the non-negativity hypothesis on $\what{f}$, we have  
\[
\what{f}\left(\sqrt{\lambda-\frac{1}{4}}\right) \leq \sum_{\mu \in \spec(\Gamma\backslash\HH^2,\phi)} \what{f}\left(\sqrt{\mu - \frac{1}{4}}\right) = \calG(\Gamma,\phi,f),
\]
which is a contradiction if $\phi$ is non-trivial. If $\phi$ is trivial, then $0 \in \spec(\Gamma\backslash\HH^2,\phi)$ so that we have
\[
\what{f}(i/2)+\what{f}\left(\sqrt{\lambda-\frac{1}{4}}\right) \leq \sum_{\mu \in \spec(\Gamma\backslash\HH^2,\phi)} \what{f}\left(\sqrt{\mu - \frac{1}{4}}\right) = \calG(\Gamma,\phi,f),
\]
which is again a contradiction.
\end{proof}

In practice, we will use the same function $f$ for all $\lambda$ in some interval $(0,b]$, which will prove that $\spec(\Gamma\backslash\HH^2,\phi) \cap (0,b] = \varnothing$. Also note that the hypotheses imply that the spectral side of the trace formula is real, hence we can take the real part of the character associated to $\phi$ in order to compute the geometric side $\calG(\Gamma,\phi,f)$.

\subsection{Test functions}

Given $d>0$, we define the following admissible pair
\[
f_d(x) = \left(\frac{1}{2d}\chi_{[-d,d]}\right)^{\ast 4}(x) \quad \text{and} \quad \what{f_d}(y) = \frac{\sin(d\cdot y)^4}{(d\cdot y)^4},
\]
where $\chi_{[-d,d]}$ denotes the characteristic function of the interval $[-d,d]$ and the exponent $\ast 4$ denotes the fourfold convolution product of  the function with itself. An elementary computation yields that
\[
f_d(x) = \left\{
\begin{array}{ll}
\frac{1}{12d} \left(4 - \frac{3}{2 d^2} x^2 + \frac{3}{8 d^3} \abs{x}^3 \right) & \text{if } 0  \leq \abs{x} \leq 2d, \\[3mm]
\frac{1}{12d}  \left(2 - \frac{\abs{x}}{2d}\right)^3 & \text{if } 2d \leq \abs{x} \leq 4d \text{ and}\\[3mm]
0 & \text{otherwise.}
\end{array}\right.
\]
In particular,
\[
\supp(f_d) = [-4d,4d].
\]

The method of Booker--Str\"ombergsson \cite{BookerStrombergsson} is based on functions that are linear combinations of shifts of test functions of this type. This method is very effective for estimating the spectrum of hyperbolic orbifolds and has been applied in various other contexts since \cite{LL1,LL2,LL3,BonifacioMazacPal,GPSDX}. We initially applied this method here as well, but then realized that just one test function was enough for our purposes.

Observe that $\what{f_d}$ is non-negative on the real and imaginary axes. Furthermore, it is increasing along the positive imaginary axis and decreasing on the interval $[0,\pi/d]$, so that $\what{f_d}\left(\sqrt{\lambda-1/4}\right)$ is decreasing for $\lambda \in [0,(\pi/d)^2+1/4]$. It follows that if the inequality in Proposition \ref{prp_linineq} is satisfied at $\lambda = b \leq (\pi/d)^2+1/4$, then it is also satisfied for every $\lambda \in (0,b]$.

The fact that $f_d$ has compact support is useful for us, because it means that if we know all the elliptic conjugacy classes and all the conjugacy classes of primitive hyperbolic elements of translation length at most $4d$ in our Fuchsian group $\Gamma$, then we can compute the geometric side $\calG(\Gamma,\phi,f_d)$ in the twisted Selberg trace formula. Since $X_{10}$, $X_{17}$, and $X_{37}$ are triangle surfaces, the corresponding Fuchsian groups (that will play the role of $\Lambda$) are normal subgroups of triangle groups (that will play the role of $\Gamma$). In that case, the conjugacy classes of elliptic elements are simply the conjugacy classes of the standard generators $x,y,z$ (see Section \ref{sec_trianglegroups}) and their powers. In the next section, we will explain how to list the conjugacy classes of primitive hyperbolic elements of small translation length in these groups.

\section{Generating conjugacy classes}\label{sec_gen_conj_classes}

Given a cocompact Fuchsian group $\Gamma < \PSL(2,\RR)$ and a real number $L>0$, we need an algorithm that lists the distinct conjugacy classes of primitive hyperbolic elements in $\Gamma$ with translation length at most $L$. Such an algorithm is described in \cite{HodgsonWeeks} for hyperbolic $3$-orbifolds and is used in the computer program \texttt{SnapPy}. The same algorithm works in dimension $2$ as well. We describe this algorithm with minor modifications below.

The idea is to generate enough elements in $\Gamma$ so that the images of a fundamental domain by these elements cover a ball of a certain radius $R=R(L)$ around a basepoint $x_0 \in \HH^2$. We then check for conjugacy between all pairs of hyperbolic elements with translation length at most $L$ via the elements in the previous list. Finally, we delete the conjugacy classes of non-primitive elements by checking if they are (conjugate to) powers of other elements. Throughout, we use interval arithmetic to make numerical calculations rigorous.

\subsection{The word problem}

To test for conjugacy, we first need to be able to test whether two elements $f,g \in\Gamma$ are equal. In theory, this should be easy: for any fixed pair of distinct points $x_0,x_1 \in \HH^2$, we have that $f=g$ if and only if they agree on $x_0$ and $x_1$ because the only element in $\PSL(2,\RR)$ with more than one fixed point in $\HH^2$ is the identity. 

The problem comes when we try to do this on the computer, where arbitrary real numbers cannot be represented exactly. Instead, we represent elements of $\Gamma$ as $2$ by $2$ matrices whose entries lie in certain intervals with rational endpoints. All calculations performed subsequently keep track of correct intervals containing the answers by rounding upper and lower bounds appropriately. This is called \emph{interval arithmetic}. 

The drawback is that we only know elements of $\Gamma$ up to a certain precision, but the redeeming feature is that $\Gamma$ is discrete, so we can still tell elements apart if the precision is good enough. To be precise, fix two distinct points $x_0$ and $x_1$ and let $\delta_j$ be the smallest distance between two distinct points in the $\Gamma$-orbit of $x_j$. If $f, g \in \Gamma$, we want to determine if $f(x_0)=g(x_0)$ and $f(x_1)=g(x_1)$, which is equivalent to
\[
d(f(x_0),g(x_0)) < \delta_0 \quad \text{and} \quad d(f(x_1),g(x_1)) < \delta_1.
 \] 
On the other hand, if either distance is strictly positive, then $f \neq g$. In theory, if there is too much imprecision on $f(x_j)$ or $g(x_j)$, then the computer might not be able to decide if the above inequalities hold or not, but we have not encountered that possibility in practice.

We will apply the above criterion when $\Gamma$ is a triangle group, where we take $x_0$ and $x_1$ to be fixed points of elliptic elements and $\delta_0$ and $\delta_1$ can be computed explicitly.

\subsection{The conjugacy problem} \label{subsec:conjugacy}

Let $F \subset \HH^2$ be a compact fundamental domain for $\Gamma$. If $\gamma \in \Gamma$ is hyperbolic, we denote its translation axis (the geodesic between its two fixed points at infinity) by $\axis(\gamma)$. If $h \in \Gamma$, then observe that $\axis(h \gamma h^{-1}) = h(\axis(\gamma))$. Since for every $y \in \HH^2$ there is some $h \in \Gamma$ such that $h(y) \in F$, any hyperbolic element $\gamma \in \Gamma$ has a conjugate $h \gamma h^{-1}$ whose axis intersects $F$. If $D$ is the diameter of $F$ and $x_0 \in F$ is some basepoint, then in particular the axis of $h \gamma h^{-1}$ is within distance $D$ of $x_0$. In \cite{HodgsonWeeks}, Hodgson and Weeks make this more efficient by using the largest distance from $x_0$ to an edge of $F$ instead of the diameter, assuming that $F$ is a Dirichlet fundamental domain.

Although there are more hyperbolic elements whose axis passes within distance $D$ of $x_0$ than whose axis intersects $F$, the former condition is easier to test.

\begin{lem} \label{lem:dist_to_axis}
Let $\gamma \in \PSL(2,\RR)$ be a hyperbolic element with translation length $\ell$, let $x_0 \in \HH^2$, and let $\delta$ be the distance between $x_0$ and $\axis(\gamma)$. Then \[\sinh(d(x_0,\gamma(x_0))/2)=\sinh(\ell / 2) \cosh(\delta).\]
\end{lem}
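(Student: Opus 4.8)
### Proof proposal

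The plan is to reduce to an explicit computation in the upper half-plane model by moving $\gamma$ to a standard position. Since both sides of the claimed identity are invariant under conjugating $\gamma$ by an isometry and simultaneously moving $x_0$ (both $\ell$ and $\delta = d(x_0,\axis(\gamma))$ are isometry invariants, and $d(x_0,\gamma(x_0))$ is too), I may assume without loss of generality that $\axis(\gamma)$ is the imaginary axis, so that $\gamma(z) = e^{\ell} z$ acting on $\HH^2$. Then the distance from any point $z$ to the imaginary axis is a classical quantity: writing $z = x+iy$, one has $\sinh(d(z,\axis(\gamma))) = |x|/y$, equivalently $\cosh(d(z,\axis(\gamma))) = |z|/y$. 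I would either quote this or derive it quickly from the formula $\cosh d(z,w) = 1 + \frac{|z-w|^2}{2\,\im(z)\im(w)}$ by minimizing over $w$ on the imaginary axis.

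Next I would compute $d(x_0,\gamma(x_0))$ directly. With $x_0 = x+iy$ and $\gamma(x_0) = e^\ell x + i e^\ell y$, the standard two-point distance formula gives
\[
\cosh d(x_0,\gamma(x_0)) = 1 + \frac{|x_0 - \gamma(x_0)|^2}{2\,\im(x_0)\,\im(\gamma(x_0))} = 1 + \frac{(e^\ell-1)^2(x^2+y^2)}{2 e^\ell y^2}.
\]
Using $\frac{(e^\ell-1)^2}{2e^\ell} = \frac{e^{\ell/2}-e^{-\ell/2}{}^2}{2}\cdot\frac12$... more cleanly, $\frac{(e^\ell-1)^2}{2e^\ell} = \frac{(e^{\ell/2}-e^{-\ell/2})^2}{2} = \frac{\cosh\ell - 1}{1}$, wait: $(e^{\ell/2}-e^{-\ell/2})^2 = e^\ell - 2 + e^{-\ell} = 2\cosh\ell - 2$, so $\frac{(e^\ell-1)^2}{2e^\ell} = \cosh\ell - 1 = 2\sinh^2(\ell/2)$. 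Hence $\cosh d(x_0,\gamma(x_0)) = 1 + 2\sinh^2(\ell/2)\,\frac{x^2+y^2}{y^2} = 1 + 2\sinh^2(\ell/2)\cosh^2\delta$, where I substituted $\cosh^2\delta = |z|^2/y^2$. Finally I convert this to the half-angle form: using $\cosh\theta = 1 + 2\sinh^2(\theta/2)$ with $\theta = d(x_0,\gamma(x_0))$ gives $2\sinh^2(d(x_0,\gamma(x_0))/2) = 2\sinh^2(\ell/2)\cosh^2\delta$, and taking square roots (everything nonnegative) yields the claimed identity.

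I do not anticipate a genuine obstacle here; the only points requiring a little care are (i) justifying the normalization, i.e.\ that conjugating $\gamma$ by $h\in\PSL(2,\RR)$ and replacing $x_0$ by $h(x_0)$ preserves all three quantities $\ell$, $\delta$, and $d(x_0,\gamma(x_0))$ — this is immediate since $\PSL(2,\RR)$ acts by isometries and sends $\axis(\gamma)$ to $\axis(h\gamma h^{-1})$ — and (ii) recording the elementary hyperbolic-geometry facts $\cosh^2(d(z,\text{imaginary axis})) = |z|^2/\im(z)^2$ and $\cosh\theta = 1+2\sinh^2(\theta/2)$. Both are standard, so the proof is essentially a short normalization followed by a one-line computation.
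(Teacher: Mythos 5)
Your proof is correct, but it takes a different route from the paper. You normalize so that $\axis(\gamma)$ is the imaginary axis and $\gamma(z)=e^{\ell}z$, then combine the two-point distance formula $\cosh d(z,w)=1+\frac{|z-w|^2}{2\,\im(z)\im(w)}$ with the point-to-axis formula $\cosh\delta=|z|/\im(z)$, and finish with the half-angle identity $\cosh\theta=1+2\sinh^2(\theta/2)$; each step checks out (in particular $\frac{(e^{\ell}-1)^2}{2e^{\ell}}=\cosh\ell-1=2\sinh^2(\ell/2)$ is right, despite the garbled intermediate line, and the invariance justification for the normalization is fine). The paper instead argues synthetically: it forms the Saccheri quadrilateral on $x_0$, $\gamma(x_0)$ and their orthogonal projections onto $\axis(\gamma)$, splits it along its axis of symmetry into two Lambert quadrilaterals, and reads the identity off a standard trigonometric formula for Lambert quadrilaterals (Buser, Formula 2.3.1(v)). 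The trade-off is the usual one: the paper's argument is a two-line reduction to a quoted identity and stays model-free, while yours is self-contained modulo two genuinely elementary half-plane facts, at the cost of a coordinate computation and an explicit appeal to conjugation-invariance. Either proof is complete and acceptable.
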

\begin{proof}
Form a Saccheri quadrilateral with $x_0$, $\gamma(x_0)$ and their orthogonal projections onto $\axis(\gamma)$, then divide it into two congruent Lambert quadrilaterals along its axis of symmetry. The resulting Lambert quadrilateral has a short side of length $\ell/2$, the opposite side of length $d(x_0,\gamma(x_0))/2$, and the longer of the two remaining sides of length $\delta$. The equation stated is Formula 2.3.1(v) in \cite[p.454]{Buser} applied to this quadrilateral.
\end{proof}

Note that the translation length $\ell$ of a hyperbolic element $\gamma \in \PSL(2,\RR)$ is easily determined in terms of its absolute trace, so the above formula gives a way to compute the distance $\delta$ between $x_0$ and the axis of $\gamma$ without actually finding that axis, which would be cumbersome to do rigorously on the computer.

By Lemma $\ref{lem:dist_to_axis}$, to generate all hyperbolic elements in $\Gamma$ of translation length at most $L$ whose axes pass within distance $D$ from $x_0$, it suffices to generate all the elements that move $x_0$ by distance at most
\[
r = 2 \arcsinh(\sinh(L/2)\cosh(D)).
\]
In other words, it suffices to generate a set $E \subset \Gamma$ such that $\bigcup_{\gamma \in E}  \gamma(F)$ contains the closed ball of radius $r$ around $x_0$. Whether that inclusion holds is something we can test by computing the minimum distance from $x_0$ to any boundary side of $\bigcup_{\gamma \in E}  \gamma(F)$ (which requires knowing which sides of the polygons $\gamma(F)$ are on the boundary of the union). Note that our formula for $r$ is smaller than the one used in \cite{HodgsonWeeks} because we are working in dimension $2$. In dimension $3$, a hyperbolic element can act as a screw motion along an axis, which increases the distance between $x_0$ and $\gamma(x_0)$.

The next thing we need is a stopping criterion to test for conjugacy between hyperbolic elements in the set $E$.

\begin{lem} \label{lem:conjugacy}
Suppose that $\alpha, \beta \in \Gamma$ are conjugate hyperbolic elements of translation length $\ell$ whose axes intersect the closed ball of radius $D$ around $x_0 \in \HH^2$. Then there is some $h \in \Gamma$ such that $\beta = h \alpha h^{-1}$ and
\[
d(x_0, h(x_0)) \leq 2\arccosh(\cosh(\ell/4)\cosh(D)).
\]
\end{lem}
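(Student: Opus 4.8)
The plan is to start from an arbitrary element of $\Gamma$ conjugating $\alpha$ to $\beta$, correct it by a suitable power of $\alpha$, and then convert the resulting configuration of two axes and two points into a bound using hyperbolic right triangles.

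First I would introduce $p$, the point of $\axis(\alpha)$ closest to $x_0$, and $q$, the point of $\axis(\beta)$ closest to $x_0$. The hypothesis that $\axis(\alpha)$ and $\axis(\beta)$ meet the closed ball $\overline{B}(x_0,D)$ means exactly that $d(x_0,p)\le D$ and $d(x_0,q)\le D$, and by the definition of nearest-point projection the segments $x_0p$ and $x_0q$ are perpendicular to $\axis(\alpha)$ and to $\axis(\beta)$, respectively. Since $\alpha$ and $\beta$ are conjugate in $\Gamma$, pick any $h_0\in\Gamma$ with $\beta=h_0\alpha h_0^{-1}$; then $h_0$ maps $\axis(\alpha)$ isometrically onto $\axis(\beta)$. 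For every $k\in\ZZ$ the element $h_k:=h_0\alpha^k$ still satisfies $\beta=h_k\alpha h_k^{-1}$, because $\alpha$ commutes with its own powers. As $\alpha$ acts on $\axis(\alpha)$ by translation of length $\ell$, the points $h_k(p)=h_0(\alpha^k(p))$ form an arithmetic progression along $\axis(\beta)$ with consecutive spacing $\ell$; hence we may choose $k$ so that, setting $h:=h_k$, we have $d(h(p),q)\le \ell/2$.

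Next I would let $c$ be the midpoint of the geodesic segment of $\axis(\beta)$ between $q$ and $h(p)$, so that $d(q,c)=d(h(p),c)\le \ell/4$. Because $h$ is an isometry carrying $\axis(\alpha)$ onto $\axis(\beta)$ and $p$ is the point of $\axis(\alpha)$ nearest $x_0$, the point $h(p)$ is the point of $\axis(\beta)$ nearest $h(x_0)$, with $d(h(x_0),h(p))=d(x_0,p)\le D$ and $h(x_0)h(p)\perp\axis(\beta)$. Now apply the hyperbolic Pythagorean theorem ($\cosh$ of the hypotenuse equals the product of the $\cosh$'s of the legs; cf.\ \cite{Buser}) to the right triangle $x_0qc$, which has its right angle at $q$, and to the right triangle $h(x_0)\,h(p)\,c$, which has its right angle at $h(p)$:
\[
\cosh d(x_0,c)\le \cosh D\,\cosh(\ell/4), \qquad \cosh d(h(x_0),c)\le \cosh D\,\cosh(\ell/4).
\]
The triangle inequality $d(x_0,h(x_0))\le d(x_0,c)+d(c,h(x_0))$ then yields $d(x_0,h(x_0))\le 2\arccosh\!\bigl(\cosh(\ell/4)\cosh D\bigr)$, as desired.

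The one point that needs care — and the reason for routing the estimate through the midpoint $c$ — is that $x_0$ and $h(x_0)$ need not lie on the same side of $\axis(\beta)$, so applying the triangle inequality directly through $q$ and $h(p)$ would only give the weaker bound $2D+\ell/2$. Splitting the estimate at $c$ keeps each of the two auxiliary triangles entirely on one side of the axis, which makes the clean Pythagorean identity applicable and removes any case distinction. Everything else is routine; note that discreteness of $\Gamma$ plays no role in the lemma itself (only that $\langle\alpha\rangle$ centralizes $\alpha$), and one could replace the spacing $\ell$ by the translation length of a generator of the full centralizer of $\alpha$ in $\Gamma$, but this refinement is not needed for the stated bound.
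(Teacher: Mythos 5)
Your argument is correct and is essentially the paper's proof: your $h_k=h_0\alpha^k$ coincides with the paper's $h=\beta^k h_0$, your points $q$, $h(p)$, $c$ are the paper's $z$, $w$, $m$, and both proofs conclude via the triangle inequality through the midpoint together with the hyperbolic Pythagorean theorem applied to two right triangles with legs at most $D$ and $\ell/4$.
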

\begin{proof}
Let $h_0 \in \Gamma$ be an element that conjugates $\alpha$ and $\beta$. Then $h_0(\axis(\alpha)) = \axis(\beta)$. In particular,
\[
d(h_0(x_0),\axis(\beta)) = d(x_0,\axis(\alpha)) \leq D.
\]
Let $z$ be the point on $\axis(\beta)$ closest to $x_0$ and let $w_0$ be the orthogonal projection of $h_0(x_0)$ onto that axis. Observe that we can replace $h_0$ with $h = \beta^k h_0$ for any $k \in \ZZ$ and still have $\beta = h \alpha h^{-1}$. Choose $k$ in such a way that $w=\beta^k(w_0)$ satisfies $d(z,w)\leq \ell/2$. If $m$ is the midpoint between $z$ and $w$, then 
\[
d(x_0,h(x_0)) \leq d(x_0, m) + d(m, h(x_0)).
\]
Each of the last two distances is the hypothenuse of a right triangle with legs of length at most $\ell/4$ and $D$. The stated inequality then follows from the hyperbolic Pythagorean theorem \cite[Equation 2.2.2(i), p.454]{Buser}.
\end{proof}

The upshot of \lemref{lem:dist_to_axis} and \lemref{lem:conjugacy} is that in order to find a unique representative of each conjugacy class of hyperbolic elements of translation length at most $L$ in $\Gamma$, it suffices to generate a set $E$ of elements in $\Gamma$ so that the images of $F$ by these elements cover the ball of radius 
\[
R = \max\Big\{2 \arcsinh(\sinh(L/2)\cosh(D)),2\arccosh(\cosh(L/4)\cosh(D))\Big\}
\] 
around $x_0$. By \lemref{lem:dist_to_axis} and the remarks preceding it, such a set $E$ contains at least one representative of each conjugacy class of hyperbolic elements of translation length at most $L$ in $\Gamma$ whose axis intersects the closed ball of radius $D$ around $x_0$ and by \lemref{lem:conjugacy}, any two such elements that are conjugate in $\Gamma$ are conjugate via an element of $E$.

We explain how to efficiently generate elements of $\Gamma$ in the special case where $\Gamma$ is a $(2,3,r)$-triangle group with $r\geq 7$ in the next subsection.

\subsection{An explicit automatic structure for some triangle groups}\label{sec_trianglegroups}

For integers $p,q,r \geq 2$ with $\frac1p+\frac1q+\frac1r<1$, the \emph{$(p,q,r)$-triangle group $T(p,q,r)$} (unique up to conjugation in $\PSL(2,\RR)$) is the group generated by rotations $x$, $y$, $z$ of counterclockwise angles $\frac{2\pi}{p}$, $\frac{2\pi}{q}$, and $\frac{2\pi}{r}$ around the vertices of a triangle $\tau_0$ in $\HH^2$ with interior angles $\frac{\pi}{p}$, $\frac{\pi}{q}$, and $\frac{\pi}{r}$ appearing in this order counterclockwise around $\tau_0$. This group is discrete and cocompact, and admits the presentation
\[
T(p,q,r) \cong \left \langle x, y, z : x^p = y^q = z^r = xyz = \id \right\rangle.
\]

To generate elements of a group efficiently, it is useful to do it without redundancy, especially when the group has exponential growth. This is precisely what automatic structures are for. Note that every cocompact Fuchsian group is word-hyperbolic and every word-hyperbolic group is automatic (see e.g. \cite[Section 3.4]{Epstein}). There also exist algorithms to compute finite state automata for such groups (see \cite[Chapter 5]{Epstein} and \cite{EpsteinHolt}). However, it is more convenient if automata are readily available. For triangle groups $T(p,q,r)$ with $p,q,r \geq 6$, explicit automatic structures were found in \cite{Pfeiffer}. This does not cover the cases $T(2,3,7)$ and $T(2,3,8)$ that we need. An automaton for $T(2,3,7)$ was described without proof in \cite[Example 1]{CKK} and is used in the video game \texttt{HyperRogue}. We generalize this automaton to groups $T(2,3,r)$ with $r \geq 7$ and provide a proof below. The automaton also works for the triangle group $T(2,3,6)$ acting on the euclidean plane.

Let $A$, $B$, and $C$ be the centers of rotation of $x$, $y$, and $z$ respectively (the vertices of $\tau_0$) and let $F$ be the union of $\tau_0$ with its reflection about the geodesic through $A$ and $C$. This is a fundamental domain for the action of $T(p,q,r)$ in general. It is also true that $F$ is a Dirichlet fundamental domain for any point in the interior of the segment between $A$ and $C$. However, it seems more natural to use $x_0 = C$ as basepoint when applying the results of subsection \ref{subsec:conjugacy} due to the symmetries of the tiling of $\HH^2$ by $(p,q,r)$-triangles.

If $p=2$ (in which case $F$ is an isoceles triangle) and $q=3$, both of which we will assume from hereon, then $P_0 = \bigcup_{k=0}^{r-1} z^k(F)$ is a regular $r$-gon with interior angles $2\pi/3$. The distinct images of $P_0$ by $T_r=T(2,3,r)$ form a tiling $\calP$ of $\RR^2$ or $\HH^2$ and the elements of $T_r$ are in bijection with the oriented edges in this tiling. Concretely, the identity element is associated to the edge $e_0$ of $F$ opposite to the vertex $C$ and oriented so that $F$ is to its left, and then any element $g\in T_r$ is associated to the edge $g(e_0)$. For every $g \in T_r$, there is also an associated polygon $g(P_0)$ on the left of the edge $g(e_0)$, but this map from group elements to polygons is $r$-to-$1$.

Observe that $x(e_0)$ is $e_0$ rotated by $\pi$ around its midpoint (so the same edge with reverse orientation) and $z(e_0)$ is the edge following $e_0$ counterclockwise around $P_0$ (so it is the oriented edge forward and left of $e_0$ in the tiling $\calP$). Suppose that $w \in T_r$ is written as $w = w_1 \cdots w_k$ with $w_j \in \{x,z\}$ for every $j \in \{ 1, \ldots, k \}$. How does one get to the edge $w(e_0)$ starting from $e_0$? By reading the word $w_1 \cdots w_k$ from left to right and interpreting each letter as an instruction to be applied to the current edge, where an $x$ means reversing the orientation of an edge and $z$ means moving to the next edge forward and left in the tiling $\calP$. Indeed, for any edge $g(e_0)$ and either generator $f \in \{ x,z \}$, we have $gf(e_0)=gfg^{-1}(g(e_0))$, which is $g(e_0)$ with reversed orientation if $f=x$ and is the next edge forward and left of $g(e_0)$ if $f=z$ because of the conjugation by $g$. Thus, $T_r$ acts on the right on the set of oriented edges via these instructions. 

In order to generate $T_r$ (or equivalently the oriented edges in the tiling), we start with the oriented edge $e_0$ (corresponding to the identity element) then apply powers of $z$ to obtain the other oriented edges that have $P_0$ on their left. For each of these, we then apply $x$ to flip the edge and move $P_0$ to an adjacent polygon $P$. Once we reach a polygon $P$, we can move around its edges with powers of $z$ and then move to further adjacent polygons with $x$. We do this in layers of polygons that form rings around $P_0$, being careful to avoid collisions or repetitions by using precise rules.

The finite state automaton we use for $T_r$ has four states L(eft), M(iddle), R(ight), I(nterior) in addition to the initial state $\varnothing$. The possible transitions between different states are given in Figure \ref{pic_automaton}. For the purpose of generating the group, one could merge the states I and R into a single one, but it is useful to keep them separate in order to compute the boundary of the region of the tiling covered after some number of iterations. This is used in conjunction with the results of the previous subsection to determine when to stop.

\begin{figure}
\begin{center}
\includegraphics[scale=0.9]{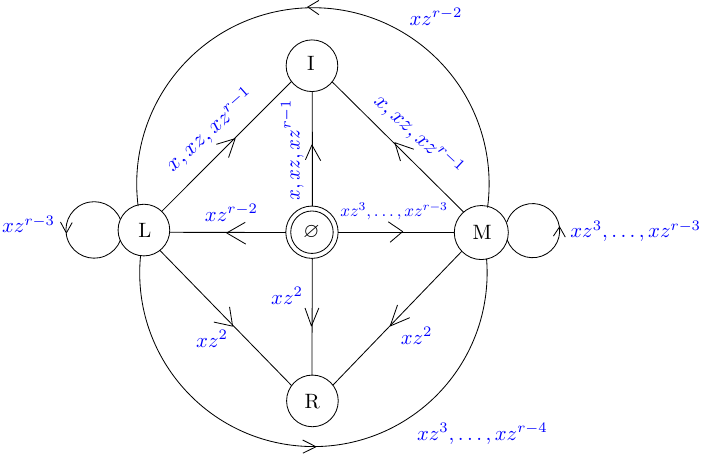}
\caption{A picture of the automaton generating the elements of $T(2,3,r)$. We have drawn the partial automaton (i.e., without dead states). The initial state (center) and the states L, M, R, I are all accepted. Multiplication is on the right and the empty word represents the identity.}\label{pic_automaton}
\end{center}
\end{figure}

Let us describe what the automaton does in words instead of using the formal terminology from the theory of automata. We start with the set $E_1:=\{\id, z, \ldots, z^{r-1}\}$ whose elements are all assigned the label M except for the identity which has the label $\varnothing$. Then for each $n\geq 1$, the next generation $E_{n+1}$ is defined as the union of the children of all the elements $g \in E_n$, where the children of an element $g$ are defined according to the following rules:
\begin{enumerate}
\item if $g$ has label I or R, then $g$ has no children;
\item otherwise, the children of $g$ are the elements of the form $gxz^j$ for $0\leq j \leq r-1$, with the following labels:
\begin{enumerate}
\item $gxz^{r-1}$, $gx$, and $gxz$ have label I;
\item $gxz^2$ has label R;
\item if $g$ has label M or $\varnothing$, then $gxz^j$ has label M if $3 \leq j \leq r-3$ and L if $j=r-2$;
\item if $g$ has label L, then $gxz^j$ has label M if $3 \leq j \leq r-4$, L if $j=r-3$, and I if $j=r-2$.
\end{enumerate}
\end{enumerate}

Observe that the oriented edges $gxz^j(e_0)$ all have the same polygon $gx(P_0)$ to their left, and this is the polygon to the right of $g(e_0)$. The labels have the following meaning. If the polygon in the tiling $\calP$ to the right of an oriented edge was obtained in the previous or the current generation, then that edge gets labelled $I$  because it is in the interior of the union of all the polygons obtained so far. The remaining edges around $gx(P_0)$ are consecutive. We label the first and last one in clockwise order with L (for leftmost) and R (for rightmost) respectively, and the remaining ones with M (because they are in the middle of this string of consecutive exterior edges). Even though the polygon to the right of an oriented edge labelled R has not been obtained yet, it will be obtained in the next generation because it lies on the right of an oriented edge of the current generation with label L. That is why the elements with label R do not generate children.
 
We now prove that this is indeed an automaton for the group $T_r$.

\begin{prop}
For every $r\geq 6$, the above automaton eventually generates every element of the triangle group $T(2,3,r)$ and it does so exactly once. That is,
\[
T(2,3,r) = \bigsqcup_{n\geq 1} E_n.
\]
\end{prop}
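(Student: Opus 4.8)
The plan is to show two things: that every element of $T_r$ is produced by the automaton (surjectivity), and that no element is produced twice (injectivity). I would organize the argument around the tiling $\calP$ of $\HH^2$ (or $\RR^2$ when $r=6$) by copies of the regular $r$-gon $P_0$, using the bijection between elements of $T_r$ and oriented edges of $\calP$ described before the statement. Under this dictionary, the set $E_1 = \{\id, z, \dots, z^{r-1}\}$ corresponds to the $r$ oriented edges bounding $P_0$ (with the outward-left orientation), and the rule $g \mapsto gxz^j$ corresponds to crossing the edge $g(e_0)$ into the neighbouring polygon and then walking $j$ steps around that neighbour. So the automaton is really performing a breadth-first exploration of the polygons of $\calP$, and the labels L, M, R, I record the combinatorial position of an oriented edge relative to the ``frontier'' of the explored region.

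For \textbf{surjectivity}, I would argue that after $n$ generations the polygons touched by $E_1 \cup \dots \cup E_n$ are exactly those at combinatorial distance $\leq n$ from $P_0$ in the adjacency graph of $\calP$, and that for each such polygon $P$, every one of its $r$ bounding oriented edges appears with some label among $\{\varnothing, \mathrm{L}, \mathrm{M}, \mathrm{R}, \mathrm{I}\}$ by generation $n$ (edges get the terminal labels I and R when they point ``backward'' toward the already-explored region, and the non-terminal labels $\varnothing, \mathrm{L}, \mathrm{M}$ when they point outward and will spawn the next layer). This is an induction on $n$: given the inductive description of generation $n$, one checks using the local geometry of the $(2,3,r)$-tiling — in particular that each polygon meets its neighbours along single edges and that around each vertex of $\calP$ exactly three polygons meet (this is where $q=3$ enters) — that applying the children rules (2)(a)--(d) fills in precisely the edges of the newly reached polygons. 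Since $\calP$ is connected, every polygon is eventually reached, hence every oriented edge, hence every group element.

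For \textbf{injectivity / no repetition}, the key is that the exploration never revisits a polygon: each polygon $P \neq P_0$ at combinatorial distance $n$ is entered for the first time at generation $n$ through exactly one of its edges, and the rule that children of an I- or R-labelled element are empty, together with the precise cutoffs ($j \in \{r-1, 0, 1\} \Rightarrow \mathrm{I}$, $j = 2 \Rightarrow \mathrm{R}$, and the shifted cutoff $j = r-2$ for L-children) is designed exactly so that the outgoing edges of $P$ that point back into distance-$\leq n$ territory are all killed, while the remaining ones each lead to a distinct fresh polygon at distance $n+1$. One then observes that each polygon at distance $n+1$ is reached from distance $n$ through exactly one polygon and one edge — this is the analogue of a tree structure on the dual graph, and it is forced by the fact that two distinct polygons sharing an edge cannot both be at the same distance from $P_0$ unless that shared edge is one of the ``backward'' edges that got the label I or R. Combining reachability with this uniqueness of the entry path gives that the map ``accepted word $\mapsto$ group element'' is a bijection.

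The \textbf{main obstacle} is the careful bookkeeping in the inductive step: one must verify that the four label classes, together with the arithmetic of the indices $j$ modulo $r$, correctly track which edges of a newly-entered polygon face ``inward'' (toward smaller distance) versus ``outward,'' and that the special treatment of L-labelled edges (whose children use the shifted ranges $3 \leq j \leq r-4$, $j = r-3$, $j=r-2 \mapsto \mathrm{I}$ in rule (2)(d)) exactly compensates for the fact that an L-edge sits at a corner where two previously-explored polygons meet, so that one extra outgoing edge must be suppressed. Making this precise amounts to a finite local case analysis around each vertex of the tiling; the hypothesis $r \geq 6$ guarantees there is enough room (the ranges like $3 \leq j \leq r-3$ are non-empty and the cases don't overlap), and the hyperbolic versus Euclidean distinction ($r \geq 7$ vs. $r = 6$) does not affect the combinatorics, only the geometry, so the same proof covers both. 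I would present this local analysis with one labelled figure showing a polygon, its neighbours, and the edge labels, and then let the induction run.
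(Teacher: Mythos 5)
Your overall architecture matches the paper's: reduce the statement about group elements to the statement that each polygon of the tiling $\calP$ other than $P_0$ is produced exactly once (via the $r$-to-$1$ correspondence and the fact that all $r$ children of an active element give the same polygon), and then run an induction on layers of a breadth-first exploration, checking that the labels L, M, R, I correctly record which edges of a newly entered polygon face inward, outward, or along the current layer. However, there is a genuine gap in your uniqueness argument. You justify "the remaining edges each lead to a \emph{distinct, fresh} polygon" and the tree-like "unique entry edge" claim purely by a finite local case analysis around the vertices of the tiling. Local analysis cannot deliver this: the new layer is constructed as a cyclically ordered chain of polygons, and nothing local prevents this chain from wrapping around so that two polygons far apart in the cycle are adjacent, or even coincide, in which case some element would be generated twice and the labelling of the next layer would be wrong. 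This is exactly the point the paper's proof has to work for: it shows the boundary curve of each layer is embedded by a geometric convexity argument (the interior angles along the curve are $2\pi/3$ except at R--L junctions where they are $4\pi/3$; smoothing the inward corners raises adjacent angles by at most $\pi/6$ each, keeping all angles $\leq \pi$, so the curve is convex, hence embedded).

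Relatedly, your remark that the hypothesis $r \geq 6$ only ensures the index ranges such as $3 \leq j \leq r-3$ are non-empty, and that "the hyperbolic versus Euclidean distinction does not affect the combinatorics, only the geometry," misidentifies where $r \geq 6$ is really used. For $r = 5$ the local combinatorics at each vertex is the same (three polygons per vertex, the dodecahedral tiling of the sphere), yet the layer-by-layer exploration does close up and overlap, so no purely local bookkeeping can prove the proposition; the nonpositive curvature (equivalently $\frac12 + \frac13 + \frac1r \leq 1$) must enter, and in the paper it enters precisely through the embeddedness/convexity step above. Your surjectivity sketch ("the dual graph is connected, so every polygon is reached") is fine once the layer induction is in place, and is an acceptable alternative to the paper's open-and-closed (completeness) argument for $\bigcup_n \calL_n$; but as written the proposal would not survive the uniqueness step without supplying a global embeddedness argument of the kind just described.
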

\begin{proof}
Since the map from $T_r = T(2,3,r)$ to polygons in the tiling $\calP$ sending $g \in T_r$ to $g(P_0)$ is $r$-to-$1$ and since every element $g$ with descendants has exactly $r$ children, which are of the form $gxz^j$ for $0 \leq j \leq r-1$ hence all of which yield the same polygon, it suffices to check that every polygon in $\calP$ except $P_0$ is obtained exactly once as the \emph{first-born child} $gx(P_0)$ of an element $g$. Note that $g(P_0)$ and $gx(P_0)$ are adjacent along the side $g(e_0)$.

For every $n \geq 1$, let $\calL_n = \bigcup_{g \in E_n} g(P_0)$ be the union of the polygons produced at generation $n$. We will prove by induction that
\begin{enumerate}
\item \label{bulletA} $\calL_n$ is the union of polygons adjacent to the outer boundary of $\calL_{n-1}$;
\item \label{bulletC} $\calL_{n}$ forms an embedded cycle of polygons, meaning that its polygons can be enumerated as $P_1$ to $P_m$ such that $P_j$ is only adjacent to $P_{j-1}$ and $P_{j+1}$ for each $j \in \{1,\ldots, m\}$, where indices are taken modulo $m$. In particular, $\calL_{n}$ is homeomorphic to an annulus.
 
\item \label{bulletB}  the outer boundary of the annulus $\calL_{n}$ consists of the oriented edges of generation $n$ that are labelled L, M, or R;

\item  \label{bulletD} each polygon in $\calL_n$ is the first-born child $gx(P_0)$ of a unique element $g$ in $E_{n-1}$;
\end{enumerate}
for every $n\geq 2$.

The statements are obvious for $n=2$, where $\calL_2$ is formed of the $r$ polygons adjacent to $\calL_1=P_0$. For every $g \in E_1$, the side $gx(e_0)$ of $gx(P_0)$ is on the inner boundary of $\calL_2$ while the sides $gxz^{\pm 1}(e_0)$ are shared with the two neighboring polygons $gz^{\pm 1}x(P_0)$. The remai\-ning sides of $\calL_2$ form a cycle whose labels form the string $(\mathrm{L}\mathrm{M}^{r-5}\mathrm{R})^r$, when read clockwise (see Figure \ref{fig:layers} for an example). Furthermore, the $r$ polygons $gx(P_0)$ for $g \in E_1$ are distinct.

Suppose the above statements are true for $n=k \geq 2$ and let us prove them for $n=k+1$.

\noindent\underline{Proof of \eqref{bulletA}:} By item \eqref{bulletB} for $n=k$, the only active elements in $E_k$ (those with label L or M) are on the outer boundary. This implies that $\calL_{k+1}$ is contained in the union of polygons adjacent to the outer boundary of $\calL_k$. It remains to show that the polygons adjacent to the edges labelled R are also contained in $\calL_{k+1}$. This is true because if $s$ is an edge of $\partial \calL_k$ with label R, then the next edge, say $g(e_0)$, along $\partial \calL_k$ in clockwise order is the first edge of the following polygon that is not labelled I (by \eqref{bulletB}), hence is labelled L. Therefore, the polygon $gx(P_0)$ is produced at generation $k+1$, and this polygon is adjacent to both $g(e_0)$ and $s$ since there are only three polygons around each vertex in $\calP$. This proves that \eqref{bulletA} holds for $n=k+1$.

\noindent\underline{Proof of \eqref{bulletC} and \eqref{bulletB}:} By item \eqref{bulletC} and item \eqref{bulletB} for $n=k$, and by our labelling rules, the labels along the outer boun\-da\-ry of $\calL_{k}$ form a cycle of strings of the form $\mathrm{L}\mathrm{M}^{r-5}\mathrm{R}$ or $\mathrm{L}\mathrm{M}^{r-6}\mathrm{R}$ (see Figure \ref{fig:layers} for an example). Observe that each such string corresponds to the (consecutive) edges of a single polygon $P$ in $\calL_k$ that lie on the outer boundary of $\calL_k$. Since all the vertices in the tiling have degree $3$, the polygons adjacent to $P$ across the sides labelled L and M form a sequence, each adjacent to the next. Furthermore, the polygon $Q$ adjacent to $P$ through the side labelled L contains the edge of the preceding string $s$ that is labelled R. Hence $Q$ is adjacent to the last polygon (the one across the last edge labelled M) in the sequence of polygons corresponding to the string $s$.  This means that the sequences of polygons corresponding to the strings link up to form a closed cycle. However, we still have to show that the polygons $P_1,\ldots,P_m$ in this cycle are pairwise distinct. In other words, we need to show that the cycle is embedded rather than just immersed.

In any case, this immersed cycle yields an immersion $f$ from an annulus $S^1 \times [0,1]$ to $\calL_{k+1}$. By our induction hypothesis, the inner loop, say $f(S^1\times \{0\})$, of that immersed cycle is embedded because it is the outer boundary of the embedded annulus $\calL_k$. We will show that the outer loop $\alpha = f(S^1 \times \{1\})$ is also embedded using geometry.

\begin{figure}
\begin{center}
\includegraphics[scale=0.8]{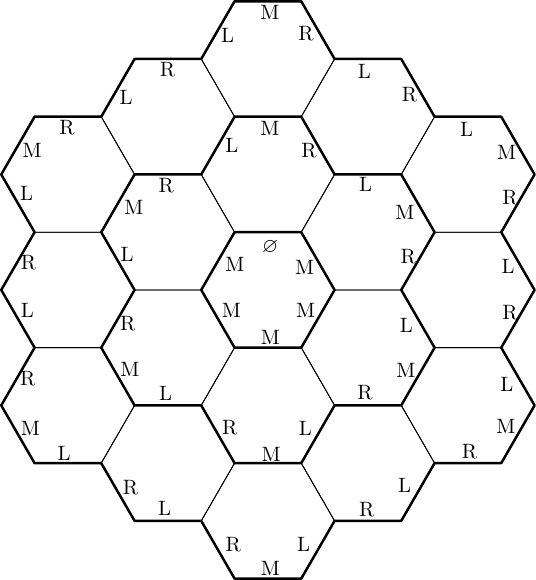}
\caption{The labels assigned to edges on the outer boundary of some layers in the tiling associated with the triangle group $T(2,3,6)$. The remaining edges are all labelled I.}\label{fig:layers}
\end{center}
\end{figure}

For each of the strings $\mathrm{L}\mathrm{M}^{m}\mathrm{R}$ along the outer boun\-da\-ry of $\calL_{k}$, and for each edge $g(e_0)$ with label $\mathrm{M}$ in that string, the edge $gx(e_0)$ is on the inner boundary of $\calL_{k+1}$ while the edges $gxz^{\pm 1}(e_0)$ are shared between consecutive polygons in $\calL_{k+1}$, hence are in its interior. For the edge $g(e_0)$ with label L, $gx(e_0)$ and $gxz^{-1}(e_0)$ are both on the inner boundary ($gxz^{-1}(e_0)$ coincides with the edge labelled R in the preceding string), while $gxz(e_0)$ and $gxz^{-2}(e_0)$ are in the interior. In either case, the children of $g$ labelled $I$ according to our rules are not along the outer loop $\alpha$ of the immersed cycle of polygons $\calL_{k+1}$. Therefore, $\alpha$ consists of the edges of $\calL_{k+1}$ labelled L, M, or R and these still form a cycle of strings of the form $\mathrm{L}\mathrm{M}^{r-5}\mathrm{R}$ or $\mathrm{L}\mathrm{M}^{r-6}\mathrm{R}$. 

Now, observe that the interior angles of $\alpha$ are $2\pi/3$ except at junctions between edges with labels R and L, where the interior angle is $4\pi/3$. Replace each such inward corner by the geodesic between its endpoints. This increases the interior angle at the adjacent vertices, but by not more than $\frac{\pi}{6}+\frac{\pi}{6}$, for a total of at most $\frac{2\pi}{3}+\frac{\pi}{6}+\frac{\pi}{6} = \pi$. The resulting curve is therefore convex, hence embedded, and therefore so is $\alpha$. It follows that the cycle of polygons in $\calL_{k+1}$ is also embedded, completing the proof of \eqref{bulletC} for $n=k+1$. Furthermore, the outer boundary of this embedded annulus $\calL_{k+1}$ is equal to the outer loop $\alpha$. As we had observed above, $\alpha$ consists of the oriented edges in $\calL_{k+1}$ with label L, M, or R, which proves \eqref{bulletB} for $n=k+1$.

\noindent\underline{Proof of \eqref{bulletD}:}  By construction, our cycle of polygons was obtained as the set of images of $P_0$ by the first-born children of the elements in $E_k$ with label L or M. We have just shown that the polygons in this cycle are all distinct, which yields statement \eqref{bulletD} for $n=k+1$ and completes the induction step.

We now show that the set $U =\bigcup_{n=1}^\infty \calL_n$ is equal to the entire euclidean or hyperbolic plane using a standard argument from the theory of tilings. $U$ is open since $\bigcup_{n=1}^m \calL_n$ is in the interior of $\bigcup_{n=1}^{m+1} \calL_n$ by item \eqref{bulletA} above. It is also complete. Indeed, suppose that $(z_k)_{k=1}^\infty$ is a Cauchy sequence in $U$. By shifting the sequence we may assume that its diameter is at most the length of any edge in the tiling $\calP$. Thus, if $z_1 \in \bigcup_{n=1}^m \calL_n$ then the entire sequence is contained in $\bigcup_{n=1}^{m+1} \calL_n$, which is compact, hence complete. We conclude that the sequence  $(z_k)_{k=1}^\infty$ converges in $U$, so that $U$ is complete. In particular, $U$ is closed. Since $\RR^2$ and $\HH^2$ are connected and $U$ is non-empty, $U$ is equal to the whole space.

We conclude that every polygon $P$ in $\calP \setminus P_0$ is contained in some $\calL_n$. By \eqref{bulletA}, the layers  $\calL_j$ all have disjoint interiors, so that $n$ is unique. Moreover, the elements that have $P$ as their first-born child are necessarily contained in $E_{n-1}$. Indeed, $P$ is only adjacent to polygons in $\calL_{n-1}$, $\calL_n$ and $\calL_{n+1}$, so it could only be the first-born child of an element in $E_{n-1}$, $E_n$, or $E_{n+1}$ but by definition the first-born child of an element in $E_k$ yields a polygon contained in $\calL_{k+1}$. Lastly, $P$ has a unique parent in $E_{n-1}$ by \eqref{bulletD}.
\end{proof}

We use this automaton to generate elements in the group $T(2,3,r)$ until we cover a large enough ball around the point $x_0 = C$ to capture all conjugacy classes of hyperbolic elements of translation length at most $L$ (and test for conjugacy between them). In order to compute the inner radius of the region covered after $n$ generations, we use the fact that its boundary is the set of edges in the last generation with labels different from I. We then measure the distance from $C$ to each of these edges and take the minimum. Since the distance function is convex, the distance from $C$ to an edge $e$ is either the height of the triangle with base $e$ and opposite vertex $C$, or the minimum of the lengths of the two other edges, depending on whether the altitude is contained in the triangle or not. 

The resulting algorithm is implemented in the \texttt{Jupyter} notebook \texttt{generate\_classes} (to be used with \texttt{SageMath}) attached as an ancillary file with the arXiv version of this paper. The program outputs lists of primitive conjugacy classes up to any reasonable translation length $L$ for any $r \geq 7$, although the computation time is exponential in $L$. These lists of conjugacy classes are then fed as input in other programs to compute the geometric side of the twisted Selberg trace formula for the irreducible representations of the orientation-preserving isometry groups of the surfaces $X_{10}$, $X_{17}$, and $X_{37}$.

\section{Finishing the proof} \label{sec:proof}

In this last section, we combine the ingredients of the previous sections to prove \thmref{thm:main}. We first describe the surfaces $X_{10}$, $X_{17}$, and $X_{37}$ and recall the strategy of proof. We then explain which part the computer does for us and how these calculations imply the result.

\subsection{The surfaces}

We have not fully described the surfaces $X_{10}$, $X_{17}$, and $X_{37}$ from Theorem \ref{thm:main} yet. They are defined as $X_g=\Lambda_{g}\backslash \HH^2$ for some torsion-free finite-index normal subgroups $\Lambda_{10}, \Lambda_{37} \triangleleft T(2,3,8)$ and $\Lambda_{17} \triangleleft T(2,3,7)$. In turn, these groups are defined as the normal closures
\[
\Lambda_{10} = \langle\langle \; (zyxz)^2yz^{-1}xy^{-1}z^{-2}xz\;\rangle\rangle^{T(2,3,8)},
\]
\begin{multline*}
\Lambda_{17} = \langle\langle \; z^{-3} x y z^{-3},\\ 
 x z y x z y x y^{-1} z^{-1} x y^{-1} z^{-1} x y^{-1} z^{-1} x y^{-1}  z^{-1} x y^{-1} z^{-1} x y^{-1} z^{-1},\\
  z y x z y z^{-1} x z y^{-1} z^{-1} x y^{-1} z^{-2} x z y^{-1} z^{-1} x y^{-1} z^{-2} x z\;\rangle\rangle^{T(2,3,7)},
\end{multline*}
and
\begin{multline*}
\Lambda_{37} = \langle\langle \; x^2, y^3, x y z, z^8, y x z y x z y x z y x y^{-1} z^{-1} x y^{-1} z^{-1} x z,\\ z^2 y z^{-1} x z^2  y  z^{-3}  x  z  y  x^2 y^{-1} z^{-1} x y^{-1} z^{-1} x y^{-1} z^-2 x z y^{-1} z^{-2} x,\\ z y x z y z^{-1} x z y x z^2 y x z y x z^2 y^{-1} z^{-3} x \;\rangle\rangle^{T(2,3,8)},
\end{multline*}
where $x$, $y$, and $z$ are the standard generators of order $p$, $q$, and $r$ in $T(p,q,r)$. We also write $G_{10}:= T(2,3,8)/\Lambda_{10}$, $G_{17}:=T(2,3,7)/\Lambda_{17}$, and $G_{37}:=T(2,3,8)/\Lambda_{37}$.

These groups were taken from the list \cite{ConderList} of all triangle surfaces of genus at most $101$ calculated by Conder using algorithms explained in \cite{ConderPaper}. The groups $G_{10}$, $G_{17}$, $G_{37}$ appear as \texttt{T10.1}, \texttt{T17.1}, and \texttt{T37.1} in Conder's list.

For completeness, we verify that $X_{10}$, $X_{17}$, $X_{37}$ are indeed surfaces without cone points and of the correct genera $10$, $17$, and $37$ in the \texttt{Jupyter} notebooks \texttt{verify\_genus\_10}, \texttt{verify\_genus\_17}, and \texttt{verify\_genus\_37} attached as ancillary files with the arXiv version of this paper. We also use \texttt{SageMath} and \texttt{GAP} to compute the character tables of $G_{10}$, $G_{17}$, and $G_{37}$; the results can be found in Appendix \ref{app_chartables}.

\subsection{Set-up and strategy}

First, we fix the parameters that will appear in the proof. 
\begin{itemize}
\item We set $\lambda_1^{\max}(10) = 1.223$, $\lambda_1^{\max}(17) = 0.969$, and $\lambda_1^{\max}(37)=0.912$. These are upper bounds for the spectral gap in the respective genera given in \cite[Table 3]{LPbounds}. Note that the upper bound we use in genus $37$ is in fact an upper bound in genus $20$ since that is the largest genus we had considered in that paper. These upper bounds on $\lambda_1$ coming from linear programming do not increase as $g$ increases due to the formulation of \cite[Theorem 8.1]{LPbounds}. 
\item We set $\Gamma_{10} = \Gamma_{37}= T(2,3,8)$ and $\Gamma_{17}=T(2,3,7)$.  
\item $\mu_{10} = 16$, $\mu_{17}=21$, and $\mu_{37}=24$ are the lower bounds on multiplicity we want to prove. These are the maximal dimensions of irreps of $G_{10}$, $G_{17}$, and $G_{37}$ over $\CC$ (see Appendix \ref{app_chartables}). 
\item The parameter $d$ we choose equals $L/4$ (so that $f_d$ has support $[-L,L]$), where $L=4$ is the translation length up to which we list the primitive hyperbolic conjugacy classes in $\Gamma_{g}$.
\end{itemize}

In all three cases, we have that $\lambda_1^{\max}(g) < \left(\frac{\pi}{d}\right)^2+\frac14$ so that $\what{f_d}\left(\sqrt{\lambda - \frac{1}{4}}\right)$ is decreasing on the interval $(0,\lambda_1^{\max}(g)]$. Our goal is then to prove that $\what{f_d}\left(\sqrt{\lambda_1^{\max}(g) - \frac{1}{4}}\right)$ is strictly larger than the right-hand side in Proposition \ref{prp_linineq} for all irreducible representations $\phi$ of $G_g$ of dimension less than $\mu_g$. This requires estimating the different terms on the geometric side of the twisted Selberg trace formula with sufficient precision.

\subsection{Formal verification}
We have performed the formal verification of the inequalities we are after using \texttt{SageMath}. The full code is available in the ancillary files \texttt{verify\_genus\_10}, \texttt{verify\_genus\_17}, \texttt{verify\_genus\_37}. We outline the steps here:
\begin{enumerate}
\item We use the interface to \texttt{GAP} in order to determine all the irreducible representations of the groups $G_g$  over $\CC$ for $g\in\{10,17,37\}$.
\item We use interval arithmetic and our lists of primitive hyperbolic conjugacy classes generated by the file \texttt{generate\_classes} to estimate the last sum in $\calG(\Gamma_g,\phi,f_d)$. 

\item For the elliptic and identity terms, we use the formulas in terms of $f_d$ from Section \ref{sec_simplified_terms}; this avoids having to estimate indefinite integrals. The integrals that do appear are treated using interval arithmetic as implemented in the \texttt{Arb} package. The integral for the identity term is slightly trickier because the integrand is a quotient of two functions that vanish at the origin. We use a Taylor approximation with error estimate near zero to get around this issue.

\end{enumerate}

\subsection{The proof} In this final subsection, we gather our results.

\begin{proof}[Proof of Theorem \ref{thm:main}] By the results of our computer code (\texttt{verifiy\_genus\_10}, \linebreak \texttt{verifiy\_genus\_17}, and \texttt{verifiy\_genus\_37}), we have
\[
\what{f_d}\left(\sqrt{\lambda - \frac{1}{4}}\right) \quad > \quad \left\{
\begin{array}{ll}
\calG(\Gamma_g,\phi,f_d) & \text{for all non-trivial }\phi \in \Irr(G_g) \\
& \text{with }\dim_{\CC}(\phi) < \mu_g \\
\calG(\Gamma_g,\phi,f_d)-\what{f_d}(i/2) & \text{for the trivial representation }\phi\in \Irr(G_g)
\end{array}\right.
\]
at $\lambda = \lambda_1^{\max}(g)$ and therefore for all $\lambda \in (0, \lambda_1^{\max}(g)]$ as well, for every $g\in\{10,17,37\}$.

By Proposition \ref{prp_linineq},
this means that $\spec(\Gamma_g \backslash \HH^2,\phi)$ is disjoint from $(0,\lambda_1^{\max}(g)]$ for every $\phi \in \Irr(G_g)$ with $\dim_\CC(\phi)<\mu_g$. Therefore, every eigenvalue in $\spec(\Lambda_g \backslash \HH^2)\cap (0,\lambda_1^{\max}(g)]$ has multiplicity equal to a sum of dimensions $\dim_\CC(\phi)$ for representations $\phi \in \Irr(G_g)$ of complex dimension at least (hence equal to) $\mu_g$ by Proposition \ref{prp_specdecomp}. We also know that $\lambda_1(X_{g}) \in (0,\lambda_1^{\max}(g)]$
by \cite[Table 3]{LPbounds}, so that $m_1(X_{g}) = k_{g} \mu_g$ for some integers $k_{g}\geq 1$. In particular, $m_1(X_{37})\geq 24$. From \cite[Table 5]{LPbounds}, we know that $m_1(X_{10}) \leq 20$ and $m_1(X_{17}) \leq 34$, from which we conclude that $k_{10}=k_{17}=1$, hence $m_1(X_{10}) = 16$ and $m_1(X_{17}) = 21$, as required.
\end{proof}

\begin{rem}
Note that Sévennec's upper bound of $2g+3$ for $m_1(X_g)$ also suffices to show that $k_g \leq 1$ for $g\in\{ 10,17\}$, but cannot rule out the possibility that $m_1(X_{37})=48$. Excluding that possibility can be done with a slight modification of Proposition \ref{prp_linineq}, which implies that in fact $m_1(X_{37})=24$.
\end{rem}

\begin{rem}
For $G_{17}$, our computer program is able to exclude the irreducible representation corresponding to $\chi_{11}$ in Table \ref{table:17}, so it is the irreducible representation corresponding to $\chi_{10}$ that appears in the first eigenspace. Similarly, only the irrep corresponding to $\chi_{19}$ in Table \ref{table:37} can appear in the first eigenspace for $X_{37}$.
\end{rem}

\bibliography{biblio}
\bibliographystyle{amsalpha}

\newpage

\appendix

\section{Character tables}\label{app_chartables}

In this appendix, we give the character tables of the orientation-preserving isometry groups $G_{10}$, $G_{17}$, and $G_{37}$ of $X_{10}$, $X_{17}$, and $X_{37}$. These were produced with the interface to \texttt{GAP} of \texttt{SageMath}. The rows of the tables correspond to the characters of the irreducible representations over $\CC$ and the columns correspond to the conjugacy classes in the group. The symbol $\zeta_k$ stands for $e^{2\pi i / k}$. 

\begin{table}[!h]
\begin{center}
\begin{tabular}{|l||l|l|l|l|l|l|l|l|l|l|l|}
\hline
 & $\{e\}$ & $C_{1}$ & $C_{2}$ & $C_{3}$ & $C_{4}$ & $C_{5}$ & $C_{6}$ & $C_{7}$ & $C_{8}$ & $C_{9}$ & $C_{10}$\\ \hline
\hline
$\chi_{1}$ & $1$ & $1$ & $1$ & $1$ & $1$ & $1$ & $1$ & $1$ & $1$ & $1$ & $1$ \\
\hline
$\chi_{2}$ & $1$ & $-1$ & $1$ & $1$ & $1$ & $1$ & $-1$ & $-1$ & $1$ & $1$ & $-1$ \\
\hline
$\chi_{3}$ & $2$ & $0$ & $-1$ & $2$ & $2$ & $2$ & $0$ & $0$ & $-1$ & $-1$ & $0$ \\
\hline
$\chi_{4}$ & $2$ & $0$ & $-1$ & $0$ & $-2$ & $2$ & $-\zeta_8-\zeta_8^3$ & $0$ & $1$ & $-1$ & $\zeta_8+\zeta_8^3$ \\
\hline
$\chi_{5}$ & $2$ & $0$ & $-1$ & $0$ & $-2$ & $2$ & $\zeta_8+\zeta_8^3$ & $0$ & $1$ & $-1$ & $-\zeta_8-\zeta_8^3$ \\
\hline
$\chi_{6}$ & $3$ & $-1$ & $0$ & $-1$ & $3$ & $3$ & $1$ & $-1$ & $0$ & $0$ & $1$ \\
\hline
$\chi_{7}$ & $3$ & $1$ & $0$ & $-1$ & $3$ & $3$ & $-1$ & $1$ & $0$ & $0$ & $-1$ \\
\hline
$\chi_{8}$ & $4$ & $0$ & $1$ & $0$ & $-4$ & $4$ & $0$ & $0$ & $-1$ & $1$ & $0$ \\
\hline
$\chi_{9}$ & $8$ & $-2$ & $2$ & $0$ & $0$ & $-1$ & $0$ & $1$ & $0$ & $-1$ & $0$ \\
\hline
$\chi_{10}$ & $8$ & $2$ & $2$ & $0$ & $0$ & $-1$ & $0$ & $-1$ & $0$ & $-1$ & $0$ \\
\hline
$\chi_{11}$ & $16$ & $0$ & $-2$ & $0$ & $0$ & $-2$ & $0$ & $0$ & $0$ & $1$ & $0$\\
\hline
\end{tabular}
\medskip

\caption{The character table of $G_{10}$.}
\end{center}
\end{table}

\begin{table}[!h]
\begin{center}
\begin{tabular}{|l||l|l|l|l|l|l|l|l|l|l|l|}
\hline
 & $\{e\}$ & $C_{1}$ & $C_{2}$ & $C_{3}$ & $C_{4}$ & $C_{5}$ & $C_{6}$ & $C_{7}$ & $C_{8}$ & $C_{9}$ & $C_{10}$\\ \hline
\hline
$\chi_{1}$ & $1$ & $1$ & $1$ & $1$ & $1$ & $1$ & $1$ & $1$ & $1$ & $1$ & $1$ \\
\hline
$\chi_{2}$ & $3$ & $3$ & $-1$ & $-1$ & $-1$ & $1$ & $0$ & $0$ & $\zeta_7^3+\zeta_7^5+\zeta_7^6$ & $\zeta_7+\zeta_7^2+\zeta_7^4$ & $1$ \\
\hline
$\chi_{3}$ & $3$ & $3$ & $-1$ & $-1$ & $-1$ & $1$ & $0$ & $0$ & $\zeta_7+\zeta_7^2+\zeta_7^4$ & $\zeta_7^3+\zeta_7^5+\zeta_7^6$ & $1$ \\
\hline
$\chi_{4}$ & $6$ & $6$ & $2$ & $2$ & $2$ & $0$ & $0$ & $0$ & $-1$ & $-1$ & $0$ \\
\hline
$\chi_{5}$ & $7$ & $7$ & $-1$ & $-1$ & $-1$ & $-1$ & $1$ & $1$ & $0$ & $0$ & $-1$ \\
\hline
$\chi_{6}$ & $7$ & $-1$ & $3$ & $-1$ & $-1$ & $1$ & $1$ & $-1$ & $0$ & $0$ & $-1$ \\
\hline
$\chi_{7}$ & $7$ & $-1$ & $-1$ & $-1$ & $3$ & $-1$ & $1$ & $-1$ & $0$ & $0$ & $1$ \\
\hline
$\chi_{8}$ & $8$ & $8$ & $0$ & $0$ & $0$ & $0$ & $-1$ & $-1$ & $1$ & $1$ & $0$ \\
\hline
$\chi_{9}$ & $14$ & $-2$ & $2$ & $-2$ & $2$ & $0$ & $-1$ & $1$ & $0$ & $0$ & $0$ \\
\hline
$\chi_{10}$ & $21$ & $-3$ & $1$ & $1$ & $-3$ & $-1$ & $0$ & $0$ & $0$ & $0$ & $1$ \\
\hline
$\chi_{11}$ & $21$ & $-3$ & $-3$ & $1$ & $1$ & $1$ & $0$ & $0$ & $0$ & $0$ & $-1$\\
\hline
\end{tabular}
\medskip
\caption{The character table of $G_{17}$.}\label{table:17}
\end{center}
\end{table}

\begin{table}
\begin{tabular}{|l||l|l|l|l|l|l|l|l|l|l|l|}
\hline
 & $\{e\}$ & $C_{1}$ & $C_{2}$ & $C_{3}$ & $C_{4}$ & $C_{5}$ & $C_{6}$ & $C_{7}$ & $C_{8}$ & $C_{9}$\\
\hline
\hline
$\chi_{1}$ & $1$ & $1$ & $1$ & $1$ & $1$ & $1$ & $1$ & $1$ & $1$ & $1$ \\
\hline
$\chi_{2}$ & $1$ & $1$ & $1$ & $1$ & $1$ & $-1$ & $-1$ & $1$ & $1$ & $1$ \\
\hline
$\chi_{3}$ & $2$ & $-1$ & $2$ & $2$ & $2$ & $0$ & $0$ & $2$ & $2$ & $2$ \\
\hline
$\chi_{4}$ & $2$ & $-1$ & $2$ & $2$ & $2$ & $0$ & $0$ & $-2$ & $0$ & $0$ \\
\hline
$\chi_{5}$ & $2$ & $-1$ & $2$ & $2$ & $2$ & $0$ & $0$ & $-2$ & $0$ & $0$ \\
\hline
$\chi_{6}$ & $3$ & $0$ & $3$ & $3$ & $3$ & $-1$ & $-1$ & $3$ & $-1$ & $-1$ \\
\hline
$\chi_{7}$ & $3$ & $0$ & $3$ & $3$ & $3$ & $1$ & $1$ & $3$ & $-1$ & $-1$ \\
\hline
$\chi_{8}$ & $3$ & $0$ & $3$ & $-1$ & $-1$ & $-1$ & $-1$ & $-1$ & $-2\zeta_8^2 - 1$ & $2\zeta_8^2 - 1$ \\
\hline
$\chi_{9}$ & $3$ & $0$ & $3$ & $-1$ & $-1$ & $-1$ & $-1$ & $-1$ & $2\zeta_8^2 - 1$ & $-2\zeta_8^2 - 1$ \\
\hline
$\chi_{10}$ & $3$ & $0$ & $3$ & $-1$ & $-1$ & $1$ & $1$ & $-1$ & $-2\zeta_8^2 - 1$ & $2\zeta_8^2 - 1$ \\
\hline
$\chi_{11}$ & $3$ & $0$ & $3$ & $-1$ & $-1$ & $1$ & $1$ & $-1$ & $2\zeta_8^2 - 1$ & $-2\zeta_8^2 - 1$ \\
\hline
$\chi_{12}$ & $4$ & $1$ & $4$ & $4$ & $4$ & $0$ & $0$ & $-4$ & $0$ & $0$ \\
\hline
$\chi_{13}$ & $6$ & $0$ & $6$ & $-2$ & $-2$ & $0$ & $0$ & $-2$ & $2$ & $2$ \\
\hline
$\chi_{14}$ & $6$ & $0$ & $6$ & $-2$ & $-2$ & $0$ & $0$ & $2$ & $0$ & $0$ \\
\hline
$\chi_{15}$ & $6$ & $0$ & $6$ & $-2$ & $-2$ & $0$ & $0$ & $2$ & $0$ & $0$ \\
\hline
$\chi_{16}$ & $8$ & $-1$ & $-1$ & $8$ & $-1$ & $2$ & $-1$ & $0$ & $0$ & $0$ \\
\hline
$\chi_{17}$ & $8$ & $-1$ & $-1$ & $8$ & $-1$ & $-2$ & $1$ & $0$ & $0$ & $0$ \\
\hline
$\chi_{18}$ & $16$ & $1$ & $-2$ & $16$ & $-2$ & $0$ & $0$ & $0$ & $0$ & $0$ \\
\hline
$\chi_{19}$ & $24$ & $0$ & $-3$ & $-8$ & $1$ & $-2$ & $1$ & $0$ & $0$ & $0$ \\
\hline
$\chi_{20}$ & $24$ & $0$ & $-3$ & $-8$ & $1$ & $2$ & $-1$ & $0$ & $0$ & $0$\\
\hline
\end{tabular}

 \medskip

\begin{tabular}{|l||l|l|l|l|l|l|l|l|l|l|l|}
\hline
 & $C_{10}$ & $C_{11}$ & $C_{12}$ & $C_{13}$ & $C_{14}$ & $C_{15}$ & $C_{16}$ & $C_{17}$ & $C_{18}$ & $C_{19}$\\
\hline
\hline
$\chi_{1}$ & $1$ & $1$ & $1$ & $1$ & $1$ & $1$ & $1$ & $1$ & $1$ & $1$ \\
\hline
$\chi_{2}$ & $-1$ & $-1$ & $-1$ & $-1$ & $1$ & $1$ & $1$ & $-1$ & $-1$ & $1$ \\
\hline
$\chi_{3}$ & $0$ & $0$ & $0$ & $0$ & $2$ & $-1$ & $-1$ & $0$ & $0$ & $2$ \\
\hline
$\chi_{4}$ & $-\zeta_8^3 - \zeta_8$ & $-\zeta_8^3 - \zeta_8$ & $\zeta_8^3 + \zeta_8$ & $\zeta_8^3 + \zeta_8$ & $-2$ & $-1$ & $1$ & $0$ & $0$ & $0$ \\
\hline
$\chi_{5}$ & $\zeta_8^3 + \zeta_8$ & $\zeta_8^3 + \zeta_8$ & $-\zeta_8^3 - \zeta_8$ & $-\zeta_8^3 - \zeta_8$ & $-2$ & $-1$ & $1$ & $0$ & $0$ & $0$ \\
\hline
$\chi_{6}$ & $1$ & $1$ & $1$ & $1$ & $3$ & $0$ & $0$ & $-1$ & $-1$ & $-1$ \\
\hline
$\chi_{7}$ & $-1$ & $-1$ & $-1$ & $-1$ & $3$ & $0$ & $0$ & $1$ & $1$ & $-1$ \\
\hline
$\chi_{8}$ & $\zeta_8^2$ & $-\zeta_8^2$ & $\zeta_8^2$ & $-\zeta_8^2$ & $3$ & $0$ & $0$ & $1$ & $1$ & $1$ \\
\hline
$\chi_{9}$ & $-\zeta_8^2$ & $\zeta_8^2$ & $-\zeta_8^2$ & $\zeta_8^2$ & $3$ & $0$ & $0$ & $1$ & $1$ & $1$ \\
\hline
$\chi_{10}$ & $-\zeta_8^2$ & $\zeta_8^2$ & $-\zeta_8^2$ & $\zeta_8^2$ & $3$ & $0$ & $0$ & $-1$ & $-1$ & $1$ \\
\hline
$\chi_{11}$ & $\zeta_8^2$ & $-\zeta_8^2$ & $\zeta_8^2$ & $-\zeta_8^2$ & $3$ & $0$ & $0$ & $-1$ & $-1$ & $1$ \\
\hline
$\chi_{12}$ & $0$ & $0$ & $0$ & $0$ & $-4$ & $1$ & $-1$ & $0$ & $0$ & $0$ \\
\hline
$\chi_{13}$ & $0$ & $0$ & $0$ & $0$ & $6$ & $0$ & $0$ & $0$ & $0$ & $-2$ \\
\hline
$\chi_{14}$ & $\zeta_8^3 - \zeta_8$ & $-\zeta_8^3 + \zeta_8$ & $-\zeta_8^3 + \zeta_8$ & $\zeta_8^3 - \zeta_8$ & $-6$ & $0$ & $0$ & $0$ & $0$ & $0$ \\
\hline
$\chi_{15}$ & $-\zeta_8^3 + \zeta_8$ & $\zeta_8^3 - \zeta_8$ & $\zeta_8^3 - \zeta_8$ & $-\zeta_8^3 + \zeta_8$ & $-6$ & $0$ & $0$ & $0$ & $0$ & $0$ \\
\hline
$\chi_{16}$ & $0$ & $0$ & $0$ & $0$ & $0$ & $2$ & $0$ & $2$ & $-1$ & $0$ \\
\hline
$\chi_{17}$ & $0$ & $0$ & $0$ & $0$ & $0$ & $2$ & $0$ & $-2$ & $1$ & $0$ \\
\hline
$\chi_{18}$ & $0$ & $0$ & $0$ & $0$ & $0$ & $-2$ & $0$ & $0$ & $0$ & $0$ \\
\hline
$\chi_{19}$ & $0$ & $0$ & $0$ & $0$ & $0$ & $0$ & $0$ & $2$ & $-1$ & $0$ \\
\hline
$\chi_{20}$ & $0$ & $0$ & $0$ & $0$ & $0$ & $0$ & $0$ & $-2$ & $1$ & $0$\\
\hline
\end{tabular}
\medskip
\caption{The character table of $G_{37}$.}\label{table:37}
\end{table}

\end{document}